\documentclass[preprint,12pt]{elsarticle}

\usepackage[a4paper,top=2cm,bottom=2cm,left=1.9cm,right=1.9cm]{geometry}
\usepackage{amsmath,amssymb,amsthm,mathtools,bm,stmaryrd}
\usepackage{booktabs,tabularx,array,float}
\usepackage{graphicx}
\usepackage{siunitx}
\usepackage{xcolor}
\usepackage{microtype}
\usepackage{hyperref}
\hypersetup{hidelinks}
\usepackage[nameinlink,capitalise]{cleveref}

\journal{Journal of Computational Physics}

\newtheorem{theorem}{Theorem}

\newtheorem{remark}{Remark}

\newcommand{\e}[1]{\times10^{#1}}
\newcommand{\state}{\bm q}
\newcommand{\Fa}{\bm F^{a}}
\newcommand{\Fv}{\bm F^{v}}
\newcommand{\sourceib}{\bm s_{\mathrm{IB}}}
\newcommand{\vel}{\bm u}
\newcommand{\vels}{\bm u_s}
\newcommand{\vrel}{\bm v}
\newcommand{\entropyvar}{\bm w}
\newcommand{\x}{\bm x}
\newcommand{\R}{\mathbb{R}}
\newcommand{\jump}[1]{\llbracket #1\rrbracket}
\newcommand{\avg}[1]{\{\!\{#1\}\!\}}
\newcommand{\inner}[2]{\left\langle #1,#2\right\rangle_{E,P}}
\newcommand{\dd}{\,\mathrm{d}}

\begin{document}

\begin{frontmatter}


\title{Simultaneous energy and entropy stability of volume-penalisation immersed-boundary methods for the compressible Navier--Stokes equations}

\author[1]{Abbas Ballout\corref{cor1}}
\ead{abbas.ballout@upm.es}
\author[1]{Albert Jiménez-Ramos}
\author[1,2]{Gonzalo Rubio}
\author[1,2]{Esteban Ferrer}
\cortext[cor1]{Corresponding author.}

\affiliation[1]{organization={ETSIAE-UPM-School of Aeronautics, Universidad Politécnica de Madrid},
            addressline={Plaza Cardenal Cisneros 3},
            postcode={28040},
            city={Madrid},
            country={Spain}}

\affiliation[2]{organization={Center for Computational Simulation, Universidad Politécnica de Madrid},
            addressline={Campus de Montegancedo, Boadilla del Monte},
            postcode={28660},
            city={Madrid},
            country={Spain}}

\begin{abstract}
Immersed-boundary methods avoid body-fitted meshes, but the energy or entropy stability of the underlying flow discretisation does not automatically extend to the immersed treatment. This manuscript analyses the volume-penalised compressible Navier--Stokes equations for both energy and entropy stability, on the continuous and semi-discrete levels, and integrates the isolated source exactly in time. The source is formulated as a one-parameter family indexed by $\theta$, whose momentum component relaxes the fluid velocity towards the prescribed solid velocity; $\theta$ sets how much of the resulting kinetic-energy loss leaves the domain rather than being converted to internal energy. The two stability requirements impose opposing bounds on $\theta$, with dissipation of the relative total energy requiring $\theta\geq0$ and stability of the mathematical entropy requiring $\theta\leq1$. Where penalisation acts, both conditions are therefore satisfied if and only if $0\leq\theta\leq1$, and the symmetric value $\theta=1/2$ divides the kinetic-energy loss equally between the domain and the internal energy. We conclude that any entropy-stable split-form discontinuous Galerkin spectral element method remains semi-discretely entropy stable once the source is added at the collocation nodes, for stationary and prescribed moving bodies alike.

Numerical experiments with a split-form DGSEM confirm the analysis. A homogeneous relaxation reproduces the closed-form energy and entropy changes of the exact integrated source to within $2.4\e{-11}$. For a Taylor--Green vortex with an immersed sphere and for a translating cylinder, the discrete energy and entropy budgets follow the predicted dependence on $\theta$: the immersed entropy contribution vanishes to machine precision at $\theta=1$, and the relative total energy is conserved to round-off at $\theta=0$. For the moving body, the total energy grows for every admissible $\theta$ while the relative total energy does not, confirming the latter as the appropriate stability measure.
\end{abstract}

\begin{keyword}
immersed boundary \sep volume penalisation \sep entropy stability \sep energy stability \sep split form \sep discontinuous Galerkin spectral element method \sep compressible Navier--Stokes
\end{keyword}

\end{frontmatter}

\tableofcontents

\section{Introduction and state of the art}
\label{sec:introduction}

Immersed-boundary (IB), embedded-boundary, and fictitious-domain techniques replace a body-fitted mesh by a geometrically simpler background grid on which the influence of an embedded solid is imposed through forcing, reconstruction, modified quadrature, weak constraints, or cut-cell fluxes. Since the original immersed-boundary formulation of Peskin, these methods have developed into a broad family encompassing diffuse-interface forcing, direct-forcing and ghost-cell approaches, cut-cell methods, fictitious-domain formulations, shifted-boundary methods, and volume penalisation \cite{Peskin2002,MittalIaccarino2005,KimChoi2019,Mittal2023,Colombo2025}. Their principal attraction is geometric flexibility, since complex or moving bodies can be represented without repeated generation of boundary-conforming meshes.

This flexibility does not, however, imply a discrete energy or entropy estimate. Even when the bulk Navier--Stokes operator is energy stable or entropy stable, the immersed treatment adds algebraic components that must be included in the stability analysis. In classical Eulerian--Lagrangian IB methods, the interpolation and force-spreading operators, elastic response, and temporal coupling determine the stability boundary. Explicit, approximate-implicit, and implicit treatments can exhibit substantially different stability behaviour \cite{TuPeskin1992}, and dedicated Fourier analysis is required even for planar model problems \cite{HuaPeskin2022}. Energy-stable variants therefore rely on specially designed couplings rather than following automatically from the standard IB formulation \cite{HuLai2013}. In fictitious-domain methods, stability depends on the multiplier or interpolation coupling, and energy estimates have been established for specific one-field formulations \cite{WangJimackWalkley2019}. Likewise, unfitted Nitsche and CutFEM methods require stabilisation to control arbitrarily small intersections and conditioning; ghost penalties or related terms are introduced to recover coercivity independently of how the interface cuts the mesh \cite{BurmanHansbo2012,BurmanHansbo2014}.

Cut-cell methods illustrate the same point in a conservative setting. Small cut cells impose severe explicit time-step restrictions, and high-order energy or entropy stability requires compatible quadrature, summation-by-parts (SBP) operators, numerical fluxes, and stabilisation. Recent work has proved $L^2$ energy stability for a high-order cut-cell discontinuous Galerkin method combined with state redistribution \cite{TaylorWilcoxChan2025}. A general high-order entropy-stable DG construction on cut meshes requires positive cut-cell quadrature and a skew-hybridised SBP formulation \cite{TaylorChan2026}. These results demonstrate that conservation or geometric consistency alone is insufficient, and stability must be designed and proved for the complete immersed discretisation.

Volume penalisation follows a different route. The solid is represented as a porous region of vanishing permeability and the boundary condition is imposed through a volumetric relaxation source \cite{Angot1999}. For stationary Brinkman drag, contraction with the velocity immediately produces a non-positive pointwise contribution to kinetic energy. This algebraic structure makes energy control more direct than in immersed formulations whose stability depends on embedded-surface interpolation, cut-element quadrature, or multiplier coupling. Volume penalisation also preserves the tensor-product structure of the background discretisation and has been coupled to high-order flux-reconstruction and discontinuous Galerkin methods for stationary and moving geometries \cite{KouEtAl2022VP,KouEtAl2022Eigen}. Subsequent work analysed its modified equation and the dependence of the numerical dissipation on the penalisation parameter \cite{LlorenteEtAl2023}, and combined the penalisation with selective-frequency damping to reduce oscillations inside the solid \cite{KouFerrer2023JCP,KouFerrer2023PoF}. Compressible implementations and extensive flow tests have been reported in the high-order solver HORSES3D \cite{FerrerEtAl2023} and in the aeronautical solver CODA \cite{NunezEtAl2025}. Colombo et al. presented a complementary high-order surrogate-boundary formulation and a discussion of alternative IB families. \cite{Colombo2025}.

Previous analyses of volume penalisation have established convergence, eigensolution properties, and additional damping in model equations \cite{Angot1999,KouEtAl2022Eigen,LlorenteEtAl2023}. What has not been made explicit is the thermodynamic structure of a compressible momentum--energy penalty that relaxes the fluid velocity towards a prescribed solid velocity. In particular, one must distinguish kinetic energy, total energy, energy measured relative to a moving body, and entropy stability in the sense of a convex mathematical entropy. For a moving body, kinetic or total energy need not decrease because the prescribed motion may perform physical work on the fluid. The appropriate estimate separates this work from dissipation of energy relative to the solid velocity.

\subsection{Energy and entropy stability in the present context}
\label{sec:intro_stability}

Energy stability implies that a continuous or discrete formulation satisfies a bound for a non-negative energy functional, up to prescribed boundary work and external forcing. In the context of this work, the relevant functional is the total energy. For a moving body, total energy is generally not monotone because the body can inject mechanical power. The useful statement is instead an estimate for total energy measured relative to the prescribed solid velocity, with the work associated with acceleration, rotation, or deformation retained explicitly.

Entropy stability concerns a convex mathematical entropy $\eta(\state)$, its entropy variables $\entropyvar=\partial\eta/\partial\state$, an associated entropy flux, and a semi-discrete inequality that mirrors the continuous second-law structure \cite{Tadmor2003,FisherCarpenter2013,Chandrashekar2013,CarpenterEtAl2014,GassnerEtAl2018}. Entropy-conservative two-point fluxes provide the volume building block, but a complete scheme also requires entropy-stable interface dissipation, an entropy-stable viscous operator, entropy-compatible physical boundaries, and a suitable time discretisation. Once an immersed source is added, the bulk estimate survives only if the contraction of that source with the entropy variables is non-positive.

For the penalisation family considered here, the two requirements constrain the same quantity and pull in opposite directions. Both are governed by how the work done by the penalisation is divided between kinetic and internal energy, and that division is what the parameter $\theta$ of \cref{sec:source_family} controls. We show that the relative-energy estimate bounds $\theta$ from below and that the entropy estimate bounds it from above, so that simultaneous stability holds on the intersection of the two bounds. Establishing that interval, at the continuous level and for the split-form discretisation, occupies the remainder of the paper.

The main contributions of this paper are:
\begin{itemize}
    \item a one-parameter family of compressible volume-penalisation sources that contains the two members already in use as the end point $\theta=1$ and the symmetric $\theta=1/2$;
    \item a relative-energy balance for the complete system with a prescribed moving body, in which the penalisation is the only non-transport contribution for a stationary body or a uniform translation;
    \item a condition on $\theta$ for simultaneous relative-energy and entropy stability, on the admissible interval $0\leq\theta\leq1$;
    \item a split-form DGSEM formulation and semi-discrete entropy and relative-energy theorems for the coupled Navier--Stokes--penalisation system, including the result that an arbitrary subset of collocation nodes may be penalised without losing the source entropy estimate;
    \item the consequence that the source is modular, any bulk discretisation already satisfying an entropy or energy estimate retaining it once the source is added;
    \item a closed-form integration of the isolated source over a time step;
    \item numerical verification in a high-order solver of the predicted source rates, of the immersed contribution to the semi-discrete entropy inequality, and of the relative-energy balance for a moving body.
\end{itemize}

The paper is organised as follows. \Cref{sec:governing} introduces the compressible Navier--Stokes equations and the source family. \Cref{sec:continuous} derives the continuous entropy and relative-energy balances. \Cref{sec:dgsem_formulation,sec:semidiscrete} present the split-form DGSEM and its semi-discrete stability proof. \Cref{sec:time} integrates the isolated source in closed form over a time step, and \cref{sec:numerical,sec:conclusions} present the numerical experiments and conclusions.

\section{Compressible Navier--Stokes equations with volume penalisation}
\label{sec:governing}

Let $\Omega\subset\R^3$ be a fixed background domain with boundary $\partial\Omega$, and let $t\in(0,t_f]$. Scalars are written in standard type, vectors and tensors in boldface. The conservative state is
\begin{equation}
    \state=
    \begin{bmatrix}
        \rho\\
        \rho\vel\\
        \rho E
    \end{bmatrix},
    \qquad
    \rho E=\rho e+\frac12\rho\lvert\vel\rvert^2,
    \label{eq:state}
\end{equation}
where $\rho$ is density, $\vel$ velocity, and $e$ and $E$ are the specific internal and specific total energies, so that $\rho e$ and $\rho E$ are the internal and total energies per unit volume. The compressible Navier--Stokes equations with an immersed source are
\begin{equation}
    \partial_t\state+\nabla\cdot\Fa(\state)
    =\nabla\cdot\Fv(\state,\nabla\state)+\sourceib(\state,\x,t).
    \label{eq:compact_ns}
\end{equation}
For Cartesian direction $j=1,2,3$, the inviscid and viscous fluxes are
\begin{equation}
    \Fa_j(\state)=
    \begin{bmatrix}
        \rho u_j\\
        \rho\vel u_j+p\bm e_j\\
        (\rho E+p)u_j
    \end{bmatrix},
    \qquad
    \Fv_j(\state,\nabla\state)=
    \begin{bmatrix}
        0\\
        \bm\tau\bm e_j\\
        \vel\cdot(\bm\tau\bm e_j)-j_{h,j}
    \end{bmatrix}.
    \label{eq:fluxes}
\end{equation}
Here $\bm\tau$ is the viscous stress and $\bm j_h$ is the heat-flux vector. For a Newtonian calorically perfect gas,
\begin{equation}
    \bm\tau
    =\mu\left(\nabla\vel+\nabla\vel^T
    -\frac{2}{3}(\nabla\cdot\vel)\bm I\right),
    \qquad
    \bm j_h=-\kappa\nabla T,
    \label{eq:constitutive}
\end{equation}
with $\mu\geq0$, $\kappa\geq0$, and
\begin{equation}
    p=(\gamma-1)\rho e,
    \qquad
    \rho>0,
    \qquad
    p>0.
    \label{eq:eos}
\end{equation}
Appropriate initial and physical boundary conditions complete the problem.

\subsection{The $\theta$ family of momentum--energy penalisation}
\label{sec:source_family}

Let $\chi(\x,t)\in[0,1]$ denote a sharp or smooth immersed-body mask and let
\begin{equation}
    \lambda(\x,t)=\frac{\chi(\x,t)}{\tau_p},
    \qquad \tau_p>0,
    \label{eq:lambda}
\end{equation}
be the non-negative relaxation rate, $\tau_p$ being the penalisation time scale. The prescribed body velocity is $\vels(\x,t)$ and the relative velocity is
\begin{equation}
    \vrel=\vel-\vels.
    \label{eq:relative_velocity}
\end{equation}
The momentum source is the Brinkman relaxation
\begin{equation}
    \bm s_{\rho\vel}=-\lambda\rho\vrel.
    \label{eq:momentum_source}
\end{equation}
We accompany it with a one-parameter energy source
\begin{equation}
    s_E^{(\theta)}
    =\left(\vels+\theta\vrel\right)\cdot\bm s_{\rho\vel}
    =-\lambda\rho\left(\vels\cdot\vrel+\theta\lvert\vrel\rvert^2\right),
    \qquad \theta\in\R.
    \label{eq:energy_source}
\end{equation}
The complete source is therefore
\begin{equation}
    \sourceib^{(\theta)}=
    \begin{bmatrix}
        0\\
        -\lambda\rho\vrel\\
        -\lambda\rho\left(\vels\cdot\vrel+\theta\lvert\vrel\rvert^2\right)
    \end{bmatrix}.
    \label{eq:source_family}
\end{equation}
The symmetric member $\theta=1/2$ can be written as
\begin{equation}
    s_E^{(1/2)}
    =\frac{\lambda\rho}{2}
    \left(\lvert\vels\rvert^2-\lvert\vel\rvert^2\right).
    \label{eq:symmetric_source}
\end{equation}
Two members of the $\theta$ family are already in use. The endpoint $\theta=1$ is the adiabatic-wall penalisation of Abgrall et al.\ \cite{AbgrallEtAl2014}, while for an isothermal wall they penalise the temperature in addition, which lies outside \cref{eq:source_family}. The symmetric member \cref{eq:symmetric_source} was adopted in its place by Kou and co-authors, and has since been implemented and tested in high-order flux-reconstruction and DG frameworks for stationary and moving geometries, including the HORSES3D and CODA solvers \cite{KouEtAl2022VP,KouFerrer2023PoF,FerrerEtAl2023,NunezEtAl2025}. These references demonstrate practical use and show that \cref{eq:source_family} is defined independently of any particular code. Other models lie outside the family. The work of Liu and Vasilyev \cite{LiuVasilyev2007} for instance, relaxes the temperature towards that of the solid and rescales the mass flux by a porosity inside the body. The work here concerns \cref{eq:source_family} alone.

\subsection{Interpretation of the energy source}
\label{sec:source_interpretation}

Take the body at rest for the moment, $\vels=\bm0$, so that $\vrel=\vel$. The momentum source is then $\bm s_{\rho\vel}=-\lambda\rho\vel$ and destroys kinetic energy at the rate $\lambda\rho\lvert\vel\rvert^2$, which carries no $\theta$.

\cref{eq:energy_source} through $\theta$ states where that energy goes. At rest it reduces to
\begin{equation}
    s_E^{(\theta)}=-\theta\lambda\rho\lvert\vel\rvert^2 ,
    \label{eq:energy_source_static}
\end{equation}
so $\theta$ is the fraction of the dissipated kinetic energy that leaves the system. The remainder has nowhere else to go but through the internal energy. Since $\rho e=\rho E-\tfrac12\rho\lvert\vel\rvert^2$, the internal energy rises at the rate $(1-\theta)\lambda\rho\lvert\vel\rvert^2$.

In particular, at $\theta=0$ the energy source vanishes, so the total energy is unchanged and the whole kinetic loss becomes heat. In contrast, at $\theta=1$ the energy source removes the kinetic loss in full from the total energy and the internal energy is unchanged. Intermediate values divide it in the ratio $1-\theta$ to $\theta$.

\section{Continuous entropy and relative-energy analysis}
\label{sec:continuous}

This section derives the two continuous estimates on which the rest of the paper rests, and the bounds they place on $\theta$. We first restate the entropy structure of the source-free compressible Navier--Stokes equations, the entropy pair, all of which is classical and is reproduced only so that the immersed source can be placed exactly within it. We then proceed to energy, and show that the total energy carries no definite sign once the body moves, unless measured relative to the solid velocity.

\subsection{Entropy pair, entropy variables, entropy potential and the entropy contribution of the immersed source}
\label{sec:entropy_pair}

Consider the general viscous conservation law with source in \cref{eq:compact_ns}. An entropy pair consists of a strictly convex scalar entropy $\eta(\state)$ and entropy fluxes $f_{\eta,j}(\state)$ satisfying the compatibility relation
\begin{equation}
    \frac{\partial f_{\eta,j}}{\partial\state}
    =\entropyvar^T\frac{\partial\Fa_j}{\partial\state},
    \qquad
    \entropyvar=\frac{\partial\eta}{\partial\state},
    \qquad j=1,2,3.
    \label{eq:entropy_compatibility}
\end{equation}
The corresponding entropy potential is, by definition
\begin{equation}
    \psi_j=\entropyvar^T\Fa_j-f_{\eta,j}.
    \label{eq:entropy_potential_definition}
\end{equation}
For the calorically perfect gas, define the dimensionless thermodynamic entropy
\begin{equation}
    s=\ln\left(\frac{p}{p_{\mathrm{ref}}}\right)
      -\gamma\ln\left(\frac{\rho}{\rho_{\mathrm{ref}}}\right),
    \label{eq:dimensionless_entropy}
\end{equation}
where the positive reference values only set an additive entropy constant, and define the convex mathematical entropy
\begin{equation}
    \eta(\state)=-\frac{\rho s}{\gamma-1}.
    \label{eq:mathematical_entropy}
\end{equation}
The entropy variables are
\begin{equation}
    \entropyvar=
    \begin{bmatrix}
        \displaystyle\frac{\gamma-s}{\gamma-1}
        -\frac{\rho\lvert\vel\rvert^2}{2p}\\[2mm]
        \displaystyle\frac{\rho\vel}{p}\\[2mm]
        \displaystyle-\frac{\rho}{p}
    \end{bmatrix}.
    \label{eq:entropy_variables}
\end{equation}
The entropy flux and entropy potential are
\begin{equation}
    \bm f_\eta=\eta\vel,
    \qquad
    \bm\psi=\rho\vel.
    \label{eq:entropy_flux_potential}
\end{equation}
These definitions are the standard Tadmor entropy structure for the compressible Euler and Navier--Stokes equations \cite{Tadmor2003,Chandrashekar2013}.

For physical viscosity and heat conduction, the viscous flux can be expressed in entropy variables as
\begin{equation}
    \Fv_i=\sum_{j=1}^{3}\bm K_{ij}(\state)\,\partial_{x_j}\entropyvar,
    \qquad
    \bm K_{ij}=\bm K_{ji}^{T},
\end{equation}
where $\bm K=[\bm K_{ij}]$ is symmetric positive semidefinite. Hence,
\begin{equation}
    \mathcal D_v
    =\sum_{i,j=1}^{3}
      (\partial_{x_i}\entropyvar)^T\bm K_{ij}
      (\partial_{x_j}\entropyvar)
    =\nabla\entropyvar:\Fv\geq0.
    \label{eq:viscous_entropy_dissipation}
\end{equation}
The double contraction in the last expression includes the spatial flux direction. Multiplying \cref{eq:compact_ns} by $\entropyvar^T$, applying the entropy compatibility relation, and using the product rule for the viscous term gives the local entropy balance
\begin{equation}
    \partial_t\eta
    +\nabla\cdot\left(\bm f_\eta-\entropyvar^T\Fv\right)
    =-\mathcal D_v+\entropyvar^T\sourceib.
    \label{eq:general_entropy_balance}
\end{equation}
This form retains the viscous entropy flux explicitly rather than absorbing it into the dissipation.

Thus far we have recalled standard results for the entropy structure of the compressible Euler and Navier--Stokes equations. For the immersed boundary method, the entropy contribution of the immersed source is given by contracting the source with the entropy variables 
\begin{align}
    \entropyvar^T\sourceib^{(\theta)}
    &=\frac{\rho}{p}
      \left(\vel\cdot\bm s_{\rho\vel}-s_E^{(\theta)}\right)\nonumber\\
    &=-(1-\theta)\lambda\frac{\rho^2}{p}
      \lvert\vrel\rvert^2.
    \label{eq:entropy_contraction}
\end{align}
Consequently, the complete continuous entropy balance is
\begin{equation}
    \partial_t\eta
    +\nabla\cdot\left(\bm f_\eta-\entropyvar^T\Fv\right)
    =-\mathcal D_v
    -(1-\theta)\lambda\frac{\rho^2}{p}\lvert\vrel\rvert^2.
    \label{eq:continuous_entropy_balance}
\end{equation}
For non-trivial penalisation states, those with $\lambda\lvert\vrel\rvert^2>0$, the immersed contribution is non-positive in the mathematical-entropy convention if and only if $\theta\leq1$.

\subsection{Relative energy and the admissible interval for $\theta$}
\label{sec:relative_energy_source}

\Cref{eq:entropy_contraction} bounds $\theta$ from above through the requirement that the immersed entropy contribution be non-positive. We show here a complementary bound from below, by an energy argument that rests on the fact that the source term in the energy equation is dissipative; it cannot, however, be stated for the total energy. The source exerts a force on the fluid; when the body moves, that force does work, and the work may be positive. The total energy then grows rather than decays. Measuring the energy relative to the solid velocity removes this work and leaves a rate that is non-positive for every prescribed $\vels$.

Write the kinetic and total energies alongside their counterparts relative to the body,
\begin{equation}
    K=\tfrac12\rho\lvert\vel\rvert^2,
    \quad
    \rho E=\rho e+K,
    \qquad
    K_{\mathrm{rel}}=\tfrac12\rho\lvert\vrel\rvert^2,
    \quad
    \rho E_{\mathrm{rel}}=\rho e+K_{\mathrm{rel}},
    \label{eq:energy_definitions}
\end{equation}
the internal energy $\rho e$ of \cref{eq:state} being common to both, since only the kinetic part depends on the frame. Expanding $\lvert\vel-\vels\rvert^2$ gives 
\begin{equation}
    \rho E_{\mathrm{rel}}
    =\rho E-\vels\cdot(\rho\vel)+\tfrac12\rho\lvert\vels\rvert^2 ,
    \label{eq:relative_total_energy}
\end{equation}
which reduces to $\rho E$ when $\vels=\bm0$.

Throughout, $\left.\partial_t(\cdot)\right|_{\mathrm{IB}}$ denotes the contribution of the source alone, with the viscous and convective fluxes omitted. The mass component of the source vanishes, so $\left.\partial_t\rho\right|_{\mathrm{IB}}=0$ and hence $\rho\left.\partial_t\vel\right|_{\mathrm{IB}}=\bm s_{\rho\vel}$. The prescribed $\vels$ is held fixed. 
The source contributions are then
\begin{align}
    \left.\partial_t K\right|_{\mathrm{IB}}
      &=\vel\cdot\bm s_{\rho\vel}
       =-\lambda\rho\,\vel\cdot\vrel ,
    &\left.\partial_t K_{\mathrm{rel}}\right|_{\mathrm{IB}}
      &=\vrel\cdot\bm s_{\rho\vel}
       =-\lambda\rho\lvert\vrel\rvert^2 ,
      \label{eq:kinetic_source}\\
    \left.\partial_t(\rho E)\right|_{\mathrm{IB}}
      &=s_E^{(\theta)} ,
    &\left.\partial_t(\rho E_{\mathrm{rel}})\right|_{\mathrm{IB}}
      &=s_E^{(\theta)}-\vels\cdot\bm s_{\rho\vel}
       =-\theta\lambda\rho\lvert\vrel\rvert^2 ,
      \label{eq:total_energy_source}
\end{align}
the internal energy being common to both frames,
\begin{equation}
    \left.\partial_t(\rho e)\right|_{\mathrm{IB}}
      =s_E^{(\theta)}-\vel\cdot\bm s_{\rho\vel}
       =(1-\theta)\lambda\rho\lvert\vrel\rvert^2 .
    \label{eq:internal_energy_source}
\end{equation}
The relative kinetic rate uses $\partial_t\vrel=\partial_t\vel$, the internal-energy rate follows from $\rho e=\rho E-K$, and the relative total-energy rate from \cref{eq:relative_total_energy}.

\begin{remark}
Notice the contrast between the columns in \cref{eq:kinetic_source} and \cref{eq:total_energy_source} as the left-hand rates contain $\vel$ and $\vels\cdot\vrel$ and are therefore of no definite sign once the body moves, whereas their relative counterparts are sign definite for every prescribed $\vels$. This is the main motivation for writing the energy balance relative to the body.
\end{remark}

The results so far isolate the source term; we now consider the balance for the complete system. Differentiating \cref{eq:relative_total_energy} in time, and keeping separate the terms in which $\vels$ itself is differentiated, gives
\begin{equation}
    \partial_t(\rho E_{\mathrm{rel}})
    =\partial_t(\rho E)
     -\vels\cdot\partial_t(\rho\vel)
     +\tfrac12\lvert\vels\rvert^2\,\partial_t\rho
     -\rho\vrel\cdot\partial_t\vels ,
    \label{eq:rel_time_derivative}
\end{equation}
where the last term collects $-(\rho\vel)\cdot\partial_t\vels+\rho\vels\cdot\partial_t\vels$. The first three terms involve the conservative variables so they are supplied directly by \cref{eq:compact_ns}. Substituting \cref{eq:compact_ns} into those three terms, using $\partial_t\rho=-\nabla\cdot(\rho\vel)$ in the third, and collecting the contributions in which $\vels$ is differentiated gives the complete relative-energy balance
\begin{align}
    \partial_t(\rho E_{\mathrm{rel}})
    +\nabla\cdot\underbrace{\Big(
       \rho E_{\mathrm{rel}}\vel
       +p\vrel
       -\bm\tau\vrel
       +\bm j_h
    \Big)}_{\text{transport}}
    ={}& \underbrace{(\bm\tau-p\bm I):\nabla\vels}_{\text{(a)}}
    \nonumber\\
    &\underbrace{-\rho\vrel\cdot
    \left(\partial_t\vels+(\vel\cdot\nabla)\vels\right)}_{\text{(b)}}
    \underbrace{-\theta\lambda\rho\lvert\vrel\rvert^2}_{\text{(c)}} ,
    \label{eq:full_relative_energy_balance}
\end{align}

Terms (a) and (b) represent power supplied by the prescribed body motion. Both vanish for a stationary body or a constant translational velocity but a deforming, rotating, or accelerating $\vels$ retains them. Term (c), the penalisation, is always non-positive for $\theta\geq0$, independently of the magnitude and direction of $\vels$. Apart from transport and this body work, it is therefore the only mechanism that changes $\rho E_{\mathrm{rel}}$.


Collecting the two conditions gives the central result of this section. Assume $\rho>0$, $p>0$, $\lambda\geq0$, and a prescribed $\vels$, and consider non-trivial penalisation states, for which $\lambda\lvert\vrel\rvert^2>0$. By \cref{eq:total_energy_source} the source is dissipative in relative total energy if and only if $\theta\geq0$, and by \cref{eq:entropy_contraction} it is stable for the convex mathematical entropy if and only if $\theta\leq1$. The two requirements are simultaneously met, for all admissible states, precisely on their intersection,
\begin{equation}
    0\leq\theta\leq1,
    \label{eq:admissible_interval}
\end{equation}
where the source satisfies
\begin{equation}
    \left.\partial_t(\rho E_{\mathrm{rel}})\right|_{\mathrm{IB}}
    =-\theta\lambda\rho\lvert\vrel\rvert^2\leq0,
    \qquad
    \entropyvar^T\sourceib^{(\theta)}
    =-(1-\theta)\lambda\frac{\rho^2}{p}\lvert\vrel\rvert^2\leq0 .
    \label{eq:simultaneous_estimates}
\end{equation}

Each end point is ideal in one budget and degenerate in the other. At $\theta=0$ the source conserves relative total energy and dissipates entropy at the maximal rate. At $\theta=1$ it dissipates relative total energy at full rate and the entropy contraction vanishes, so the source is entropy neutral. A value of $\theta>1$ makes the contraction positive, so the mathematical entropy grows. The symmetric value $\theta=1/2$ places each budget at half its extremal rate.

\section{Split-form DGSEM discretisation}
\label{sec:dgsem_formulation}

This section follows the standard SBP/DGSEM entropy formalism for curvilinear hexahedral elements \cite{FisherCarpenter2013,CarpenterEtAl2014,GassnerWintersKopriva2016,GassnerEtAl2018}. The detailed construction is given for Gauss--Lobatto nodes. The immersed-source result extends to any collocated method with a positive diagonal norm, including Gauss collocation, provided the underlying bulk discretisation already satisfies an entropy estimate.

\subsection{Curvilinear mapping and polynomial approximation}

Let the domain be partitioned into non-overlapping hexahedral elements $\Omega_e$. Each element is the image of the reference cube $\widehat\Omega=[-1,1]^3$ under a smooth mapping
\begin{equation}
    \x=\bm X_e(\bm\xi),
    \qquad
    \bm\xi=(\xi^1,\xi^2,\xi^3).
\end{equation}
Let $J>0$ be the mapping Jacobian and $J\bm a^\alpha$, $\alpha=1,2,3$, the volume-weighted contravariant basis vectors. A physical flux $\bm F=(\bm F_1,\bm F_2,\bm F_3)$ is transformed according to
\begin{equation}
    \widetilde{\bm F}_\alpha
    =\sum_{j=1}^{3}J a_j^\alpha\bm F_j,
    \qquad
    \nabla\cdot\bm F
    =\frac{1}{J}\sum_{\alpha=1}^{3}
      \frac{\partial\widetilde{\bm F}_\alpha}{\partial\xi^\alpha}.
    \label{eq:contravariant_flux}
\end{equation}
The metric terms are constructed so that the discrete metric identities
\begin{equation}
    \sum_{\alpha=1}^{3}D_\alpha(Ja_j^\alpha)=0,
    \qquad j=1,2,3,
    \label{eq:metric_identities}
\end{equation}
are satisfied, ensuring free-stream preservation on curvilinear meshes \cite{Kopriva2006,GassnerWintersKopriva2016}.

On each coordinate direction, let $\{\xi_i\}_{i=0}^{P}$ be the Gauss--Lobatto nodes, $\{\omega_i\}_{i=0}^{P}$ the positive quadrature weights, and $l_i$ the associated Lagrange basis. The nodal approximation is
\begin{equation}
    \state_P(\bm\xi,t)
    =\sum_{i,j,k=0}^{P}\state_{ijk}(t)
      l_i(\xi^1)l_j(\xi^2)l_k(\xi^3).
    \label{eq:nodal_approximation}
\end{equation}
The one-dimensional derivative and norm matrices are
\begin{equation}
    D_{im}=l_m'(\xi_i),
    \qquad
    W=\operatorname{diag}(\omega_0,\ldots,\omega_P),
    \qquad
    Q=WD,
\end{equation}
and satisfy the SBP identity
\begin{equation}
    Q+Q^T=B,
    \qquad
    B=\operatorname{diag}(-1,0,\ldots,0,1).
    \label{eq:sbp_identity}
\end{equation}
The tensor-product discrete inner product is
\begin{equation}
    \inner{a}{b}
    =\sum_{i,j,k=0}^{P}\omega_i\omega_j\omega_k
      a_{ijk}^Tb_{ijk}.
    \label{eq:discrete_inner_product}
\end{equation}

\subsection{Weak form, flux differencing, and viscous discretisation}

The mapped weak form on an element is
\begin{align}
    \inner{J\partial_t\state}{\bm\varphi}
    &+\int_{\partial\widehat\Omega,P}
      \bm\varphi^T\left(
      \widetilde{\bm F}^{a,*}-\widetilde{\bm F}^{v,*}
      \right)\cdot\widehat{\bm n}\,\dd S_\xi
    \nonumber\\
    &-\inner{\widetilde{\Fa}-\widetilde{\Fv}}{\nabla_\xi\bm\varphi}
    =\inner{J\sourceib}{\bm\varphi}.
    \label{eq:dg_weak_form}
\end{align}
The standard nonlinear inviscid volume term is replaced by flux differencing. Let $\bm F_j^{a,\#}(\state_L,\state_R)$ be a symmetric and consistent two-point flux. Along $\xi^1$, the flux-differencing derivative is
\begin{equation}
    \left(\mathcal D_1\bm F^{a,\#}\right)_{ijk}
    =2\sum_{m=0}^{P}D_{im}
      \sum_{l=1}^{3}\avg{Ja_l^{1}}\,
      \bm F_l^{a,\#}(\state_{ijk},\state_{mjk}),
    \label{eq:flux_differencing}
\end{equation}
the inner sum running over Cartesian directions and $\avg{\cdot}$ denoting the arithmetic mean of the metric over the two nodes. Analogous expressions hold in the other two directions. Appropriate Ranocha type fluxes provide representative entropy-conservative volume fluxes \cite{Chandrashekar2013,Ranocha2018,Ranocha2020}.

For the viscous terms, introduce the auxiliary variable $\bm G=\nabla\entropyvar$ and write $\Fv=\Fv(\state,\bm G)$, with $\bm G$ obtained from a companion gradient equation discretised with the same SBP operators. An entropy-stable BR1 discretisation then takes both the viscous flux and the entropy-variable trace as arithmetic averages,
\begin{equation}
    \bm F^{v,*}_j=\avg{\Fv_j},
    \qquad
    \entropyvar^{*}=\avg{\entropyvar},
    \qquad j=1,2,3,
    \label{eq:br1_traces}
\end{equation}
which is entropy stable on Gauss--Lobatto nodes \cite{BassiRebay1997,GassnerEtAl2018}. Other entropy-stable viscous formulations may be used without changing the immersed-source analysis.

\subsection{Collocated immersed source}
\label{sec:nodal_implementation}

At a solution quadrature node $i$, recover the primitive variables from the conservative state:
\begin{equation}
    \vel_i=\frac{(\rho\vel)_i}{\rho_i},
    \qquad
    (\rho e)_i=(\rho E)_i-\frac12\rho_i\lvert\vel_i\rvert^2,
    \qquad
    p_i=(\gamma-1)(\rho e)_i.
    \label{eq:primitive_recovery}
\end{equation}
Then evaluate $\chi_i$, $\vel_{s,i}$, $\lambda_i=\chi_i/\tau_p$, and $\vrel_i=\vel_i-\vel_{s,i}$. The nodal source is
\begin{equation}
    \bm s_{\mathrm{IB},i}^{(\theta)}=
    \begin{bmatrix}
        0\\
        -\lambda_i\rho_i\vrel_i\\
        -\lambda_i\rho_i\left(
        \vel_{s,i}\cdot\vrel_i+\theta\lvert\vrel_i\rvert^2
        \right)
    \end{bmatrix}.
    \label{eq:nodal_source}
\end{equation}
It is evaluated at every tensor-product node and enters the right-hand side of \cref{eq:dg_weak_form} through the same quadrature norm as the time derivative, so that for a diagonal collocation norm $M_i=J_i\omega_i>0$ the semi-discrete residual takes the form
\begin{equation}
    M_i\frac{\dd\state_i}{\dd t}
    =\bm R_{\mathrm{NS},i}+M_i\bm s_{\mathrm{IB},i}^{(\theta)}.
    \label{eq:nodal_residual}
\end{equation}
No embedded-surface numerical flux or cut-cell quadrature is introduced by the volume source. Define the set of penalised nodes in element $e$ as
\begin{equation}
    \mathcal I_e^s
    =\{(i,j,k):\chi_{e,ijk}>0\}.
    \label{eq:penalised_set}
\end{equation}
Only nodes with $\chi_{e,ijk}>0$ are penalised, so the mask may activate one node, an arbitrary proper subset, or all nodes of an element without changing the local source formula. The stability results require admissible nodal states, $\rho_i>0$ and $p_i>0$, and a non-negative mask.

\section{Semi-discrete entropy and energy stability}
\label{sec:semidiscrete}

Beyond the symmetry and consistency assumed in \cref{sec:dgsem_formulation}, entropy conservation requires the inviscid volume flux to satisfy Tadmor's condition \cite{Tadmor2003}, that is,
\begin{equation}
    \jump{\entropyvar}^T\bm F_j^{a,\#}
    =\jump{\psi_j},
    \qquad j=1,2,3.
    \label{eq:tadmor_condition}
\end{equation}

\begin{theorem}[Semi-discrete entropy stability of the coupled scheme]
\label{thm:semidiscrete_entropy}
Assume that the discrete metric identities \cref{eq:metric_identities} hold, the inviscid volume flux satisfies \cref{eq:tadmor_condition}, the interface flux is entropy stable, the viscous and physical-boundary discretisations satisfy their standard entropy estimates, and the nodal states are admissible. If $\theta\leq1$, $\lambda_{e,ijk}\geq0$, and the immersed source is evaluated at the quadrature nodes, then the coupled DGSEM satisfies
\begin{equation}
    \frac{\dd}{\dd t}
    \sum_{e\in\Omega}\inner{J\eta(\state)}{1}
    +\mathcal B_\eta
    =-\mathcal D_h^{v}
     -\mathcal D_h^{\mathrm{int}}
     -\mathcal D_h^{\mathrm{IB}}
    \leq0,
    \label{eq:semidiscrete_entropy_inequality}
\end{equation}
where $\mathcal B_\eta$ contains the prescribed physical-boundary entropy transfer, $\mathcal D_h^{v}\geq0$ is the viscous entropy dissipation, $\mathcal D_h^{\mathrm{int}}\geq0$ is the interface dissipation, and
\begin{equation}
    \mathcal D_h^{\mathrm{IB}}
    =(1-\theta)
    \sum_{e\in\Omega}\sum_{(i,j,k)\in\mathcal I_e^s}
    \omega_i\omega_j\omega_k J_{e,ijk}
    \lambda_{e,ijk}\frac{\rho_{e,ijk}^2}{p_{e,ijk}}
    \lvert\vel_{e,ijk}-\vel_{s,e,ijk}\rvert^2
    \geq0.
    \label{eq:discrete_ib_dissipation}
\end{equation}
\end{theorem}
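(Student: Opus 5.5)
The plan is to contract the nodal residual \cref{eq:nodal_residual} with the entropy variables and split the result into the bulk Navier--Stokes part and the immersed part. The bulk part is controlled by the assumed entropy estimate of the source-free split-form DGSEM. The immersed part is handled node by node using the pointwise contraction \cref{eq:entropy_contraction}. Because the source enters through the same diagonal norm $M_i=J_i\omega_i$ as the time derivative, the two parts decouple exactly.

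First, at each node the chain rule in time gives $\entropyvar_i^T\,\dd\state_i/\dd t=\dd\eta(\state_i)/\dd t$. I multiply \cref{eq:nodal_residual} by $\entropyvar_i^T$ and sum over all nodes and elements. Since the norm is diagonal and positive, the left-hand side becomes
\[
\frac{\dd}{\dd t}\sum_{e}\inner{J\eta(\state)}{1}.
\]
Second, the bulk contribution $\sum_i\entropyvar_i^T\bm R_{\mathrm{NS},i}$ is exactly the quantity bounded in the source-free analysis. It relies on the SBP identity \cref{eq:sbp_identity}, flux differencing \cref{eq:flux_differencing} with Tadmor's condition \cref{eq:tadmor_condition}, the metric identities \cref{eq:metric_identities}, the entropy-stable interface flux, BR1 \cref{eq:br1_traces}, and the boundary treatment. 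I would cite the standard results \cite{FisherCarpenter2013,GassnerWintersKopriva2016,GassnerEtAl2018} rather than rederive them. They give
\[
\sum_i\entropyvar_i^T\bm R_{\mathrm{NS},i}=-\mathcal B_\eta-\mathcal D_h^{v}-\mathcal D_h^{\mathrm{int}},
\qquad \mathcal D_h^{v},\ \mathcal D_h^{\mathrm{int}}\geq0.
\]
I would remark briefly that none of these arguments involves the source, so they hold verbatim in the coupled scheme.

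Third, the immersed contribution is $\sum_i M_i\entropyvar_i^T\bm s_{\mathrm{IB},i}^{(\theta)}$. The nodal source \cref{eq:nodal_source} has exactly the algebraic form of \cref{eq:source_family}, with all quantities evaluated at the node from the recovered primitives \cref{eq:primitive_recovery}. The computation \cref{eq:entropy_contraction} is purely algebraic and pointwise, so it applies unchanged at each node. Explicitly, the mass entry of the source vanishes, the momentum entry pairs with $\rho_i\vel_i/p_i$, and the energy entry pairs with $-\rho_i/p_i$. This yields
\[
\entropyvar_i^T\bm s_{\mathrm{IB},i}^{(\theta)}=-(1-\theta)\lambda_i\frac{\rho_i^2}{p_i}\lvert\vrel_i\rvert^2 .
\]
Nodes outside $\mathcal I_e^s$ have $\lambda_i=0$ and drop out, so summing with weights $\omega_i\omega_j\omega_kJ_{e,ijk}$ gives exactly $-\mathcal D_h^{\mathrm{IB}}$ in \cref{eq:discrete_ib_dissipation}. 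Its sign follows from $\theta\leq1$, $\lambda\geq0$, $J>0$, $\omega>0$, and admissibility ($\rho_i>0$, $p_i>0$). Combining the three steps gives \cref{eq:semidiscrete_entropy_inequality}.

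The main obstacle is making sure the bulk estimate truly decouples from the source. The source must enter only through the diagonal collocation norm, with no projection or interpolation that would prevent the nodal contraction from passing through. The source contraction must also use the same nodal entropy variables as the bulk estimate, which holds because everything is collocated. This is automatic for the Gauss--Lobatto collocation in \cref{eq:nodal_residual}, and it is the only place where collocation is essential. This also explains why an arbitrary subset of penalised nodes is permitted.
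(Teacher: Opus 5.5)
Your proposal is correct and follows essentially the same route as the paper's proof. You contract with the entropy variables, obtain $-\mathcal B_\eta-\mathcal D_h^{v}-\mathcal D_h^{\mathrm{int}}$ from the standard SBP, Tadmor-flux and BR1 arguments for the bulk operator, and sum the pointwise identity \cref{eq:entropy_contraction} with the positive nodal weights $\omega_i\omega_j\omega_kJ_{e,ijk}$ to produce $-\mathcal D_h^{\mathrm{IB}}$. One small correction to your closing remark: the source step is not the only place collocation is needed, since it is also what lets $\entropyvar_i^T\dd\state_i/\dd t=\dd\eta(\state_i)/\dd t$ combine with the diagonal norm and what underlies the bulk split-form entropy estimate.
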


\begin{proof}
Choose the entropy variables as test functions in \cref{eq:dg_weak_form} and sum over all elements. The temporal term satisfies $\entropyvar^T\partial_t\state=\partial_t\eta$. The SBP identity and the Tadmor condition reduce the inviscid volume contributions to entropy fluxes at element boundaries. Interior entropy-conservative contributions telescope, while the dissipative interface flux produces $-\mathcal D_h^{\mathrm{int}}$. The entropy-stable viscous discretisation produces $-\mathcal D_h^v$. At each collocation node, \cref{eq:entropy_contraction} gives
\begin{equation}
    \entropyvar_{e,ijk}^T\bm s_{\mathrm{IB},e,ijk}^{(\theta)}
    =-(1-\theta)\lambda_{e,ijk}
      \frac{\rho_{e,ijk}^2}{p_{e,ijk}}
      \lvert\vel_{e,ijk}-\vel_{s,e,ijk}\rvert^2.
\end{equation}
Multiplication by the positive nodal norm $\omega_i\omega_j\omega_kJ_{e,ijk}$ and summation gives $-\mathcal D_h^{\mathrm{IB}}$. Combining all contributions proves \cref{eq:semidiscrete_entropy_inequality}.
\end{proof}

\begin{theorem}[Arbitrary penalised subsets and relative-energy dissipation]
\label{thm:semidiscrete_energy}
Let a collocated discretisation possess a positive diagonal norm and an entropy estimate before the immersed source is added, and apply \cref{eq:nodal_source} at any subset of nodes with $\chi_i\in[0,1]$. For $\lambda_{e,ijk}\geq0$ the entropy estimate is preserved whenever $\theta\leq1$, and the source contribution to the discrete relative total energy is
\begin{equation}
    \left.\frac{\dd}{\dd t}
    \sum_{e\in\Omega}\sum_{i,j,k}\omega_i\omega_j\omega_kJ_{e,ijk}
    \rho E_{\mathrm{rel},e,ijk}
    \right|_{\mathrm{IB}}
    =-\theta\sum_{e\in\Omega}\sum_{(i,j,k)\in\mathcal I_e^s}
    \omega_i\omega_j\omega_kJ_{e,ijk}
    \lambda_{e,ijk}\rho_{e,ijk}\lvert\vel_{e,ijk}-\vel_{s,e,ijk}\rvert^2\leq0
    \label{eq:discrete_energy_source}
\end{equation}
whenever $\theta\geq0$, whatever the prescribed $\vels$. Penalised nodes contribute non-positive terms in both budgets and unpenalised nodes contribute zero, so neither result requires the entire cut element to be penalised, and both apply to positive-weight Gauss and Gauss--Lobatto collocation.
\end{theorem}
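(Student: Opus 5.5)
The plan is to exploit the fact that the collocated source \cref{eq:nodal_source} acts pointwise through the diagonal norm $M_i=J_i\omega_i>0$ in \cref{eq:nodal_residual}. Both budgets then split into the bulk contribution, which is unchanged by hypothesis, plus a weighted sum of nodal source terms. Each nodal term is exactly the continuous pointwise identity evaluated at that node. No SBP property, metric identity or interface argument is needed for the source part; only positivity and diagonality of the norm enter.

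\textbf{Entropy part.} I contract \cref{eq:nodal_residual} at each node with $\entropyvar_i^T$ and sum over all nodes and elements. Since $\entropyvar_i^T\dd\state_i/\dd t=\dd\eta(\state_i)/\dd t$ pointwise, the left side is the time derivative of the discrete entropy $\sum M_i\eta_i$. The terms $\sum\entropyvar_i^T\bm R_{\mathrm{NS},i}$ are precisely those of the assumed pre-existing entropy estimate, so they are bounded as before. For the remaining term $\sum_i M_i\entropyvar_i^T\bm s^{(\theta)}_{\mathrm{IB},i}$, I use the algebraic identity \cref{eq:entropy_contraction}. It holds for any admissible state $\rho_i>0$, $p_i>0$ and any $\vel_{s,i}$, because it only uses the form of $\entropyvar$ in \cref{eq:entropy_variables} and the nodal primitive recovery \cref{eq:primitive_recovery}. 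This gives $-(1-\theta)\lambda_i(\rho_i^2/p_i)\lvert\vrel_i\rvert^2$. With $M_i>0$, $\lambda_i=\chi_i/\tau_p\geq0$ and $\theta\leq1$, every term is non-positive. Nodes with $\chi_i=0$ give exactly zero, so the sum reduces to $\mathcal I_e^s$, which is the same computation as the IB step in the proof of \cref{thm:semidiscrete_entropy}.

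\textbf{Relative-energy part.} I define the discrete relative energy nodewise via \cref{eq:relative_total_energy}: $\rho E_{\mathrm{rel},i}=(\rho E)_i-\vel_{s,i}\cdot(\rho\vel)_i+\tfrac12\rho_i\lvert\vel_{s,i}\rvert^2$. This is a function of $\state_i$ with $\vel_{s,i}$ frozen, matching the meaning of $|_{\mathrm{IB}}$. Its state derivative is $\partial(\rho E_{\mathrm{rel}})/\partial\state=(\tfrac12\lvert\vels\rvert^2,-\vels,1)$. Contracting it with the nodal source gives $s_E^{(\theta)}-\vels\cdot\bm s_{\rho\vel}=-\theta\lambda_i\rho_i\lvert\vrel_i\rvert^2$, as in \cref{eq:total_energy_source}. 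Multiplying by $M_i$ and summing yields \cref{eq:discrete_energy_source}, which is non-positive for $\theta\geq0$ independently of $\vels$. Again unpenalised nodes drop out.

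\textbf{Remaining claims.} Nothing in these steps refers to which nodes are penalised or to Gauss--Lobatto specifically, only to collocation with a positive diagonal norm. This yields the statements on arbitrary subsets and on Gauss and Gauss--Lobatto nodes. The main obstacle is only a matter of care: the entropy claim must be phrased as \emph{preserving} an existing estimate. The collocation must be genuine, meaning the source is tested with the nodal $\entropyvar_i$ through the same diagonal norm as the time derivative, so that no interpolation or projection couples the source across nodes and breaks the pointwise sign. I will state this explicitly for Gauss nodes, where boundary values are extrapolated but the volume norm is still diagonal.
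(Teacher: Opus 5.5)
Your proposal is correct and follows the paper's own proof. Both write each budget as a sum, with positive weights $\omega_i\omega_j\omega_kJ_{e,ijk}$, of the pointwise identities \cref{eq:entropy_contraction} and \cref{eq:total_energy_source}, so every term keeps its sign and nodes with $\chi_i=0$ contribute zero. Your explicit contraction of the nodal source with $\partial(\rho E_{\mathrm{rel}})/\partial\state$ only spells out the step the paper takes directly from \cref{eq:total_energy_source}.
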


\begin{proof}
The two source contributions are weighted sums of the pointwise identities \cref{eq:entropy_contraction} and \cref{eq:total_energy_source}, which hold at every collocation node. Every weight and Jacobian is positive, so multiplication by $\omega_i\omega_j\omega_kJ_{e,ijk}$ and summation preserves the sign of each term. Removing any terms from a sum of non-positive contributions cannot change its sign.
\end{proof}

\Cref{thm:semidiscrete_energy} is the source contribution alone. For a stationary body or constant translation the body-work terms of \cref{eq:full_relative_energy_balance} vanish and the remaining bulk contributions are conservative, so the global relative total energy is non-increasing. Both results are modular as any Navier--Stokes discretisation that already satisfies an entropy or energy estimate retains it after addition of the compatible source. Both rest on a positive diagonal norm and therefore require the quadrature and solution nodes to coincide, which they do not in a modal or an over-integrated DG scheme. \Cref{eq:entropy_contraction} holds pointwise, so care is needed wherever the mask, the body velocity, or the source is projected or filtered.

\section{Exact integration of the source}
\label{sec:time}

The rates \cref{eq:kinetic_source,eq:internal_energy_source,eq:total_energy_source} are already restricted to the source, under which $\left.\partial_t\rho\right|_{\mathrm{IB}}=0$, and they can be integrated in closed form rather than approximated by a time-stepping scheme. This section derives that solution and shows that it preserves the continuous estimates for any step size. It also provides a closed-form reference against which a numerical implementation of the source can be checked.

Since $\rho\left.\partial_t\vel\right|_{\mathrm{IB}}=\bm s_{\rho\vel}$ and $\vels$ is held fixed, \cref{eq:momentum_source} gives
\begin{equation}
    \frac{\dd\vrel}{\dd t}=-\lambda\vrel,
    \label{eq:relaxation_ode}
\end{equation}
so, writing $\Delta(\cdot)$ for the change over the step and $a=\lambda\Delta t$,
\begin{equation}
    \vrel^{n+1}=e^{-a}\vrel^n.
    \label{eq:exact_velocity}
\end{equation}
Integrating \cref{eq:internal_energy_source} gives
\begin{equation}
    \Delta(\rho e)
    =\frac{1-\theta}{2}\rho
     \left(1-e^{-2a}\right)\lvert\vrel^n\rvert^2
    \geq0
    \quad\text{for }\theta\leq1 .
    \label{eq:exact_internal_energy}
\end{equation}
The conservative variables are reconstructed as
\begin{equation}
    (\rho\vel)^{n+1}
    =\rho\left(\vels+e^{-a}\vrel^n\right),
    \qquad
    (\rho E)^{n+1}
    =(\rho e)^{n}+\Delta(\rho e)
    +\frac12\rho\left\lvert\vels+e^{-a}\vrel^n\right\rvert^2.
    \label{eq:exact_conservative_update}
\end{equation}
Integrating \cref{eq:total_energy_source} in the same way,
\begin{equation}
    \Delta(\rho E_{\mathrm{rel}})
    =-\frac{\theta}{2}\rho
     \left(1-e^{-2a}\right)\lvert\vrel^n\rvert^2
    \leq0
    \quad\text{for }\theta\geq0 .
    \label{eq:exact_energy}
\end{equation}

The factor $1-e^{-2a}$ is non-negative for every $a=\lambda\Delta t\geq0$, so both signs hold for any step size. Internal energy therefore cannot decrease, so $(\rho e)^n>0$ gives $p^{n+1}>0$, and with $\rho$ constant \cref{eq:mathematical_entropy} integrates to
\begin{equation}
    \Delta\eta
    =-\frac{\rho}{\gamma-1}
     \ln\!\left(1+\frac{\Delta(\rho e)}{(\rho e)^n}\right)\leq0,
    \label{eq:exact_entropy_change}
\end{equation}
which vanishes at $\theta=1$, where the source is entropy neutral.

Integrated exactly, the source therefore leaves the admissible interval \cref{eq:admissible_interval} unchanged over the step, whatever its size. The update assumes that the mask, the relaxation rate and the solid velocity are frozen over the step, and these results hold for the exact update alone. A numerical time integrator applied to the same source need not reproduce them, and a fully discrete analysis lies outside the scope of this work, the estimates of \cref{sec:semidiscrete} remaining semi-discrete.

\section{Numerical experiments}
\label{sec:numerical}

We now verify the results of \cref{sec:continuous,sec:semidiscrete,sec:time} in three complementary configurations. All computations are non-dimensional. Lengths are scaled by a reference length $L_0$, velocities by a reference velocity $u_0$, density by $\rho_0$, pressure by $\rho_0u_0^2$, and time by the convective unit $L_0/u_0$, so that the reference Mach and Reynolds numbers are
\begin{equation}
    M_0=\frac{u_0}{\sqrt{\gamma p_0/\rho_0}},
    \qquad
    \mathrm{Re}=\frac{\rho_0u_0L_0}{\mu},
    \label{eq:nondimensionalisation}
\end{equation}
and the reference pressure and internal energy follow as $p_0=\rho_0u_0^2/(\gamma M_0^2)$ and $(\rho e)_0=p_0/(\gamma-1)$.

The computations use the split-form DGSEM of \cref{sec:dgsem_formulation} on Gauss--Lobatto nodes throughout that satisfy the summation-by-parts property, with the BR1 scheme for the viscous terms. Time integration is performed throughout with an explicit low-storage third-order Runge--Kutta scheme. Every configuration below is periodic in all directions, so no physical-boundary term enters any budget and $\mathcal B_\eta=0$ in \cref{eq:semidiscrete_entropy_inequality} throughout. The implementation uses a binary mask and a penalisation time $\tau_p$ that is uniform within each element, so that $\lambda_{e,ijk}=\chi_{e,ijk}/\tau_p$ is either zero or $1/\tau_p$ at a given node; the theory admits $\chi_{e,ijk}\in[0,1]$ and hence a $\lambda_{e,ijk}$ that varies continuously from node to node. All runs stay inside the admissible interval $0\leq\theta\leq1$, so the necessity of \cref{eq:admissible_interval} rests on the analysis rather than on computation.
The implementation was developed within the HORSES3D framework~\cite{FerrerEtAl2023}, an open-source high-order DGSEM solver
developed at the ETSIAE-UPM School of Aeronautics in Madrid, and available on Github (\url{https://github.com/loganoz/HORSES3D}) with the GPU ported version available in (\url{https://github.com/loganoz/HORSES3D-gpu}).

\subsection{Homogeneous source relaxation}
\label{subsec:relaxation}

This case verifies the closed forms of \cref{sec:time}. It does so independently of the spatial discretisation, isolating the source contribution with no bulk operator acting alongside it. Consider $\rho=1$, $\lambda=1$, $\lvert\vel^0\rvert=1$ and $\vels=\bm0$, and integrate to $t=1$, so that $a=\lambda t=1$.

Evaluated for these parameters, \cref{eq:exact_velocity,eq:exact_internal_energy,eq:exact_energy,eq:exact_entropy_change} give
\begin{align}
    \Delta K &=-\tfrac12\left(1-e^{-2}\right),
    &\Delta(\rho e) &=\tfrac{1-\theta}{2}\left(1-e^{-2}\right),
    \nonumber\\
    \Delta(\rho E) &=-\tfrac{\theta}{2}\left(1-e^{-2}\right),
    &\Delta\eta &=-\frac{1}{\gamma-1}
      \ln\!\left(1+\frac{\Delta(\rho e)}{(\rho e)_0}\right).
    \label{eq:relaxation_exact}
\end{align}

These values are reproduced numerically on a periodic unit box discretised with $2^3$ elements at polynomial order $P=3$. The mask encloses the entire mesh, so $\chi_{e,ijk}=1$ at every node, and the initial state is uniform. Every inviscid and viscous flux therefore vanishes identically and, because all nodes relax at the same rate, the state remains uniform for all time. The discrete system reduces to \cref{eq:relaxation_ode}, and any departure from the closed form is attributable to time-integration error alone. The reference Mach number is $M_0=0.2$, so $(\rho e)_0=44.642857$, and $1000$ steps of size $\Delta t=10^{-3}$ are taken so that $\lambda t=1$ at the final time.

\Cref{tab:relaxation_results} reports the computed values against \cref{eq:relaxation_exact}. Every quantity is recovered with an error of at most $2.4\e{-11}$.

\begin{table}[H]
    \centering
    \begin{tabular}{l r r l}
        \toprule
        Quantity & Computed & Exact & Error\\
        \midrule
        \multicolumn{4}{l}{$\theta=0$: energy conservative (\cref{eq:total_energy_source}); maximal entropy dissipation (\cref{eq:entropy_contraction})}\\
        $\Delta(\rho E)$ & $0.000000000000$  & $0$               & $0$\\
        $\Delta(\rho e)$ & $+0.432332358387$ & $+0.432332358382$ & $5.64\e{-12}$\\
        $\Delta K$ & $-0.432332358387$ & $-0.432332358382$ & $5.64\e{-12}$\\
        $\Delta\eta$ & $-0.024094132728$ & $-0.024094132728$ & $3.13\e{-13}$\\
        \midrule
        \multicolumn{4}{l}{$\theta=1/2$: equal energy dissipation and internal-energy production (\cref{eq:total_energy_source,eq:internal_energy_source})}\\
        $\Delta(\rho E)$ & $-0.216166179203$ & $-0.216166179191$ & $1.19\e{-11}$\\
        $\Delta(\rho e)$ & $+0.216166179185$ & $+0.216166179191$ & $6.21\e{-12}$\\
        $\Delta K$ & $-0.432332358387$ & $-0.432332358382$ & $5.64\e{-12}$\\
        $\Delta\eta$ & $-0.012076092613$ & $-0.012076092613$ & $3.46\e{-13}$\\
        \midrule
        \multicolumn{4}{l}{$\theta=1$: maximal energy dissipation (\cref{eq:total_energy_source}); source entropy neutral (\cref{eq:entropy_contraction})}\\
        $\Delta(\rho E)$ & $-0.432332358405$ & $-0.432332358382$ & $2.36\e{-11}$\\
        $\Delta(\rho e)$ & $-1.80\e{-11}$    & $0$               & $1.80\e{-11}$\\
        $\Delta K$ & $-0.432332358387$ & $-0.432332358382$ & $5.64\e{-12}$\\
        $\Delta\eta$ & $+1.01\e{-12}$    & $0$               & $1.01\e{-12}$\\
        \bottomrule
    \end{tabular}
    \caption{Homogeneous source relaxation at $t=1$ for $\rho=\lambda=\lvert\vel^0\rvert=1$: computed and exact source-induced changes, the exact values following from \cref{eq:relaxation_exact}. Each end-point interpretation is cited on its group row.}
    \label{tab:relaxation_results}
\end{table}

Four entries deserve note. At $\theta=0$ the total energy is unchanged, confirming that the source removes momentum without removing energy. At $\theta=1/2$ the two energy changes are equal and opposite, and each is half the kinetic-energy loss in the row beneath, which is the equal partition of \cref{eq:internal_energy_source,eq:total_energy_source}. The kinetic-energy change is identical across the sweep, as required by the $\theta$-independence of \cref{eq:kinetic_source}. At $\theta=1$ the internal-energy production and the entropy change both vanish to time-integration accuracy, confirming that the upper end point is exactly entropy neutral.

\subsection{Taylor--Green vortex with an immersed sphere}
\label{subsec:tgv}

Having verified the source in isolation, we now couple it to the complete discretisation. The configuration is a Taylor--Green vortex on a triply periodic box containing an immersed sphere. The sphere cuts the background mesh so that the elements at the interface contain penalised and non-penalised nodes, exercising \cref{thm:semidiscrete_energy}. The initial condition is
\begin{equation}
\begin{aligned}
\rho &= \rho_0, \\
u &= u_0 \sin(x/L_0)\cos(y/L_0)\cos(z/L_0), \\
v &= -u_0 \cos(x/L_0)\sin(y/L_0)\cos(z/L_0), \\
w &= 0,
\end{aligned}
\end{equation}
with the corresponding pressure field
\begin{equation}
p =
\frac{\rho_0 u_0^2}{\gamma M_0^2}
+
\frac{\rho_0 u_0^2}{16}
\left[
\left(
\cos(2x/L_0)+\cos(2y/L_0)
\right)
\left(
\cos(2z/L_0)+2
\right)
\right].
\end{equation}
The parameters are $\gamma=1.4$, $p_0=100$, $\mathrm{Re}=100$, $\rho_0=1$, $u_0=1$, and $L_0=1$, so that $M_0=0.0845$, in a domain spanning $[0,2\pi]^3$. The domain is discretised with $20^3$ elements in polynomial order $P=3$, and the sphere has a radius $R=1$ and is centred at $(\pi,\pi,\pi)$. A fixed time step $\Delta t=8\e{-4}$ is used instead of a prescribed CFL number, so the relaxation rate $\lambda=1/\Delta t$ is identical across all runs and in both volume discretisations.

Two global quantities are monitored, the volume averages of total energy and kinetic energy. In the discrete inner product of \cref{eq:discrete_inner_product} these are
\begin{equation}
    \mathcal T_h=\frac{1}{V}\sum_{e\in\Omega}\inner{J\rho E}{1},
    \qquad
    \mathcal K_h=\frac{1}{V}\sum_{e\in\Omega}\inner{J K}{1},
    \label{eq:discrete_functionals}
\end{equation}
with $V$ the volume of the domain. The total energy tests \cref{eq:discrete_energy_source} directly, since the bulk operator is conservative and the source is therefore the only mechanism that can change $\mathcal T_h$.

Additionally, we monitor the kinetic-energy and entropy rate balances. The kinetic-energy balance subtracts the viscous dissipation, pressure work, and source work from $\partial_t K$, while the entropy balance subtracts viscous dissipation from $\partial_t\eta$. As implemented, the two monitors are
\begin{align}
    \mathcal B_h^{K}
      &=\frac{1}{V}\sum_{e\in\Omega}\inner{J\left(
         \vel\cdot\dot{(\rho\vel)}-\tfrac12\lvert\vel\rvert^2\dot\rho
         +\bm\tau:\nabla\vel-p\,\nabla\cdot\vel\right)}{1}
       -\mathcal S_h ,
      \label{eq:kinetic_balance_monitor}\\
    \mathcal B_h^{\eta}
      &=\frac{1}{V}\left(\sum_{e\in\Omega}\inner{J\,\entropyvar^T\dot\state}{1}
       +\mathcal D_h^{v}\right) ,
      \label{eq:entropy_balance_monitor}
\end{align}
with the immersed contribution
\begin{equation}
    \mathcal S_h
    =-\frac{1}{V}\sum_{e\in\Omega}\sum_{i,j,k}
    \omega_i\omega_j\omega_k J_{e,ijk}\,
    \lambda_{e,ijk}\rho_{e,ijk}\,
    \left(\vel_{e,ijk}-\vel_{s,e,ijk}\right)\cdot\vel_{e,ijk} ,
    \label{eq:discrete_kinetic_source}
\end{equation}
the volume average discrete counterpart of \cref{eq:kinetic_source}. The entropy balance \cref{eq:entropy_balance_monitor} deliberately excludes the immersed contribution, so that it can be seen to vanish from the entropy budget at $\theta=1$, as \cref{thm:semidiscrete_entropy} requires. Note that kinetic energy is not one of the stability criteria and that we make no stability statement about it. However, the balance $\mathcal B_h^{K}$ is instead reported to show that a kinetic-energy-preserving volume flux remains kinetic-energy-preserving once the source is added. This further demonstrates the modularity of the source.

Two experiments are performed. The first uses the kinetic-energy-preserving Pirozzoli volume flux \cite{Pirozzoli2010} and reports $\Delta\mathcal T_h$, $\Delta\mathcal K_h$ and $\mathcal B_h^{K}$; the second uses the entropy-conservative Chandrashekar flux \cite{Chandrashekar2013} and reports $\mathcal B_h^{\eta}$, since \cref{thm:semidiscrete_entropy} requires an entropy-conservative volume flux, whereas the relative-energy estimate does not. In both, the volume two-point flux is reused at the element interfaces. No dissipation is added at any interface, and $\theta$ is swept over $0\leq\theta\leq1$. The tables below are reported at $t=10$, after $12\,500$ steps.

\begin{figure}[H]
    \centering
    \includegraphics[trim=41 40 13 28, clip, height=0.46\textwidth]{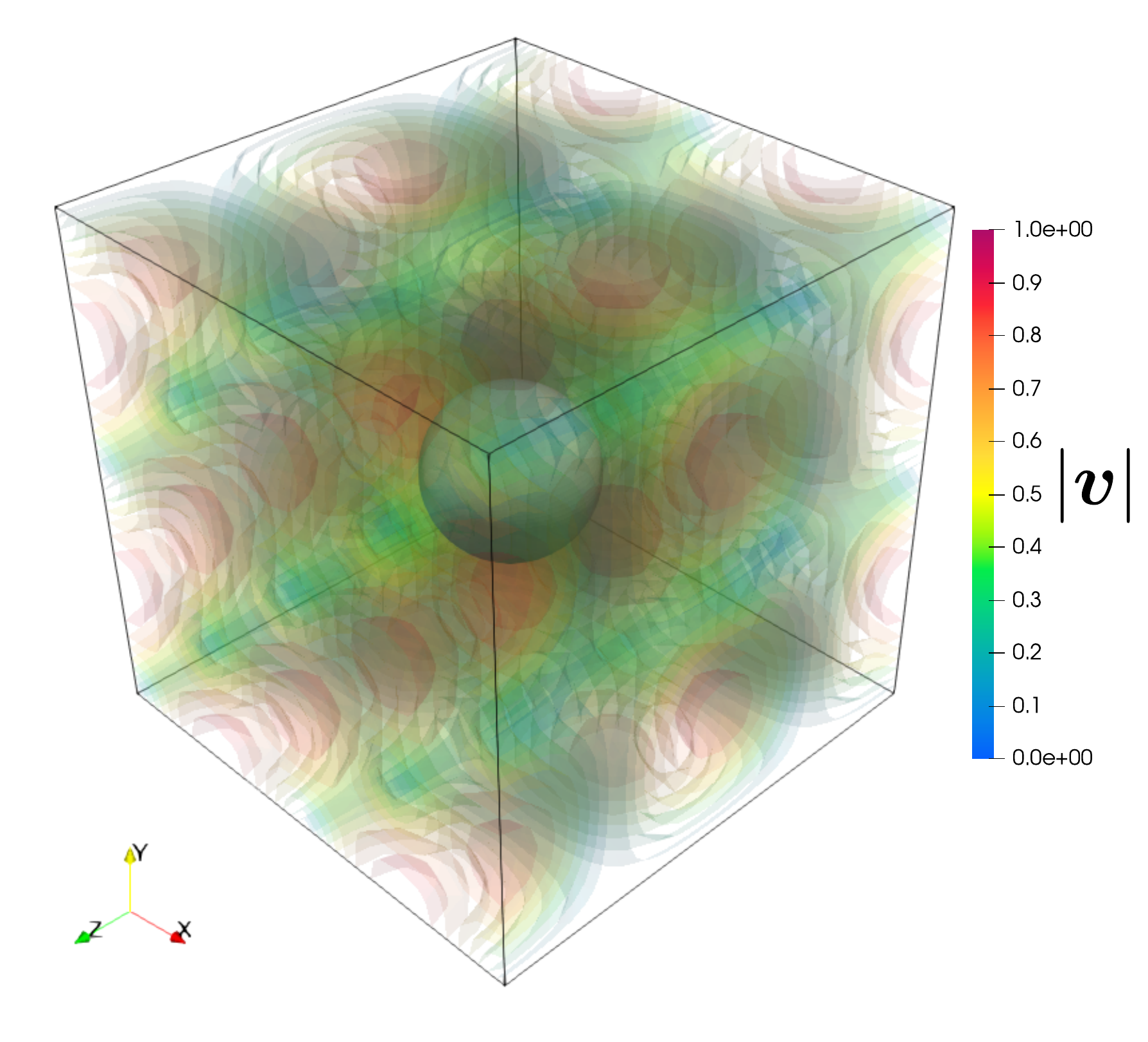}\hfill
    \includegraphics[trim=0 0 99 0, clip, height=0.46\textwidth]{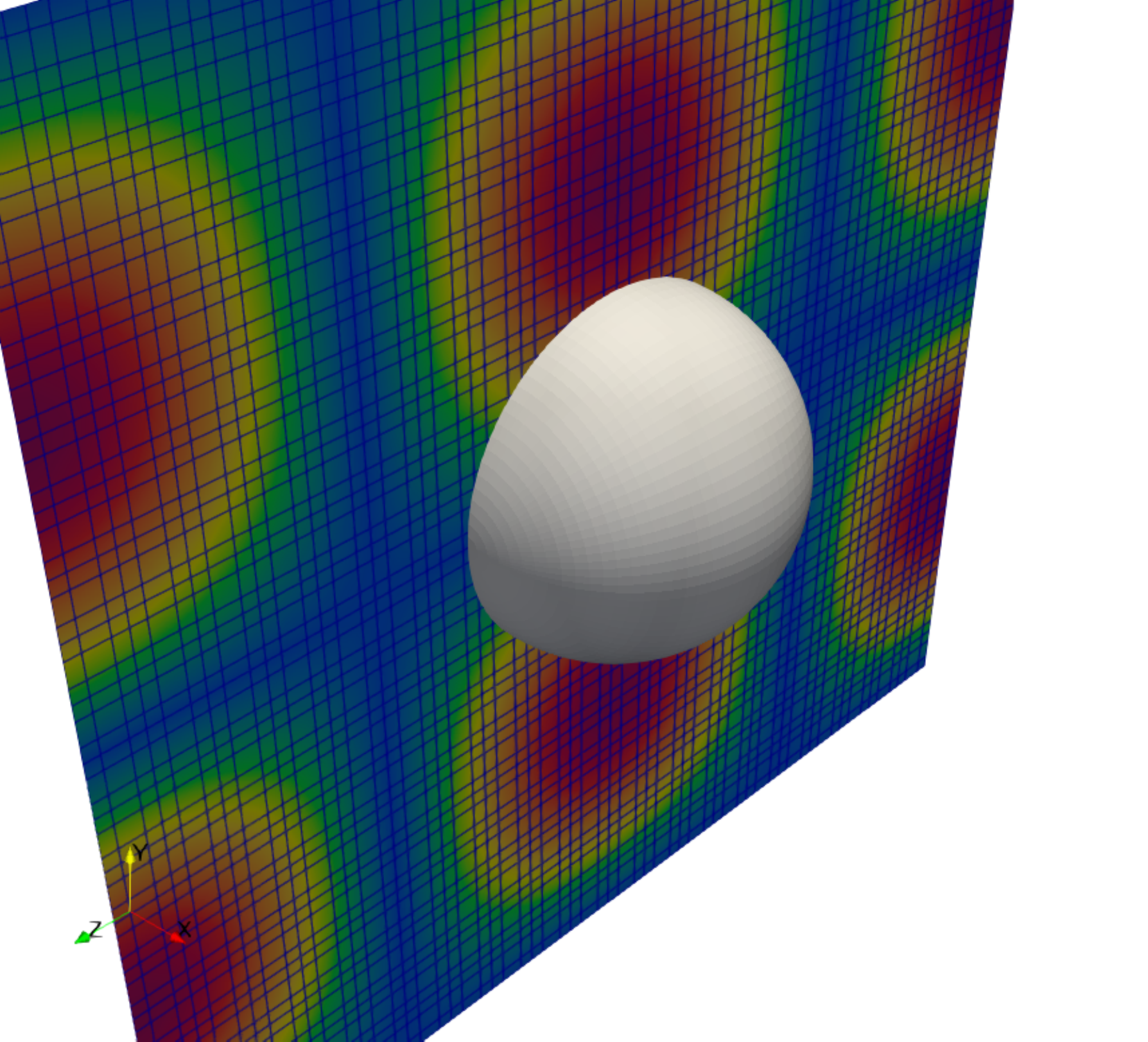}
    \caption{Initial velocity magnitude for the Taylor--Green vortex with the immersed sphere. Left, a transparent rendering of the whole periodic box with the sphere visible at its centre; right, a slice through the domain on which the Gauss--Lobatto collocation nodes are drawn, the immersed body cutting through them so that an element at the interface carries both penalised and unpenalised nodes.}
    \label{fig:tgv_cut}
\end{figure}

\Cref{fig:tgv_cut} shows the initial field, both within the whole periodic box and on a slice through it. The body cuts the background elements, so a cut element carries both penalised and unpenalised nodes, which is what \cref{thm:semidiscrete_energy} requires. The resolution is clearly insufficient even at this polynomial degree; the case is meant to test the budget rather than to produce a converged solution. We note that with the split forms used here the case runs to the final time $t=10$, whereas the same configuration without the split form diverges after $362$ steps at $t=0.29$.

\subsubsection{Energy budgets}

\begin{table}[H]
    \centering
    \begin{tabular}{c c c c}
        \toprule
        $\theta$ & $\Delta\mathcal T_h$ & $\Delta\mathcal K_h$ & $\lvert\mathcal B_h^{K}\rvert$ \\
        \midrule
        $0$   & $-7.11\e{-13}$    & $-0.100360875$ & $9.50\e{-16}$ \\
        $1/2$ & $-1.800335\e{-3}$ & $-0.100360882$ & $8.34\e{-16}$ \\
        $1$   & $-3.600576\e{-3}$ & $-0.100360887$ & $8.47\e{-16}$ \\
        \bottomrule
    \end{tabular}
    \caption{Taylor--Green vortex with an immersed sphere, Pirozzoli volume flux: total-energy and kinetic-energy changes and the discrete kinetic-energy balance at $t=10$.}
    \label{tab:tgv_energy}
\end{table}

\Cref{tab:tgv_energy} confirms four theoretical predictions laid in this work. First, the $\Delta\mathcal T_h$ column shows the total energy is conserved at $\theta=0$. \Cref{thm:semidiscrete_energy} predicts exactly zero there through \cref{eq:discrete_energy_source}, and with $\vels=\bm0$. Second, that same column is linear in $\theta$, as the factor $-\theta$ of \cref{eq:discrete_energy_source} requires, so the two remaining values may be compared directly as $\Delta\mathcal T_h(1)/\Delta\mathcal T_h(1/2)$ is $1.9999480$ against the exact $2$. The agreement is approximate, and we associate the departure with the fact that $\theta$ alters the trajectory of the simulation itself, unlike in \cref{subsec:relaxation}, where each run is compared with its own closed-form solution rather than with another run. Third, the $\lvert\mathcal B_h^{K}\rvert$ column shows that the discrete kinetic-energy budget remains correct once the source is added, the balance of \cref{eq:kinetic_balance_monitor} holding machine zero for every $\theta$. The source removes relative kinetic energy by construction, so the scheme is not kinetic-energy preserving in the presence of the body. What the column shows is that the source contributes exactly the $\mathcal S_h$ of \cref{eq:discrete_kinetic_source} and introduces no further production. Fourth, the $\Delta\mathcal K_h$ column is the same at every $\theta$, as \cref{eq:kinetic_source} requires since only the momentum source enters it.

\begin{figure}[H]
    \centering
    \includegraphics[width=0.8\textwidth]{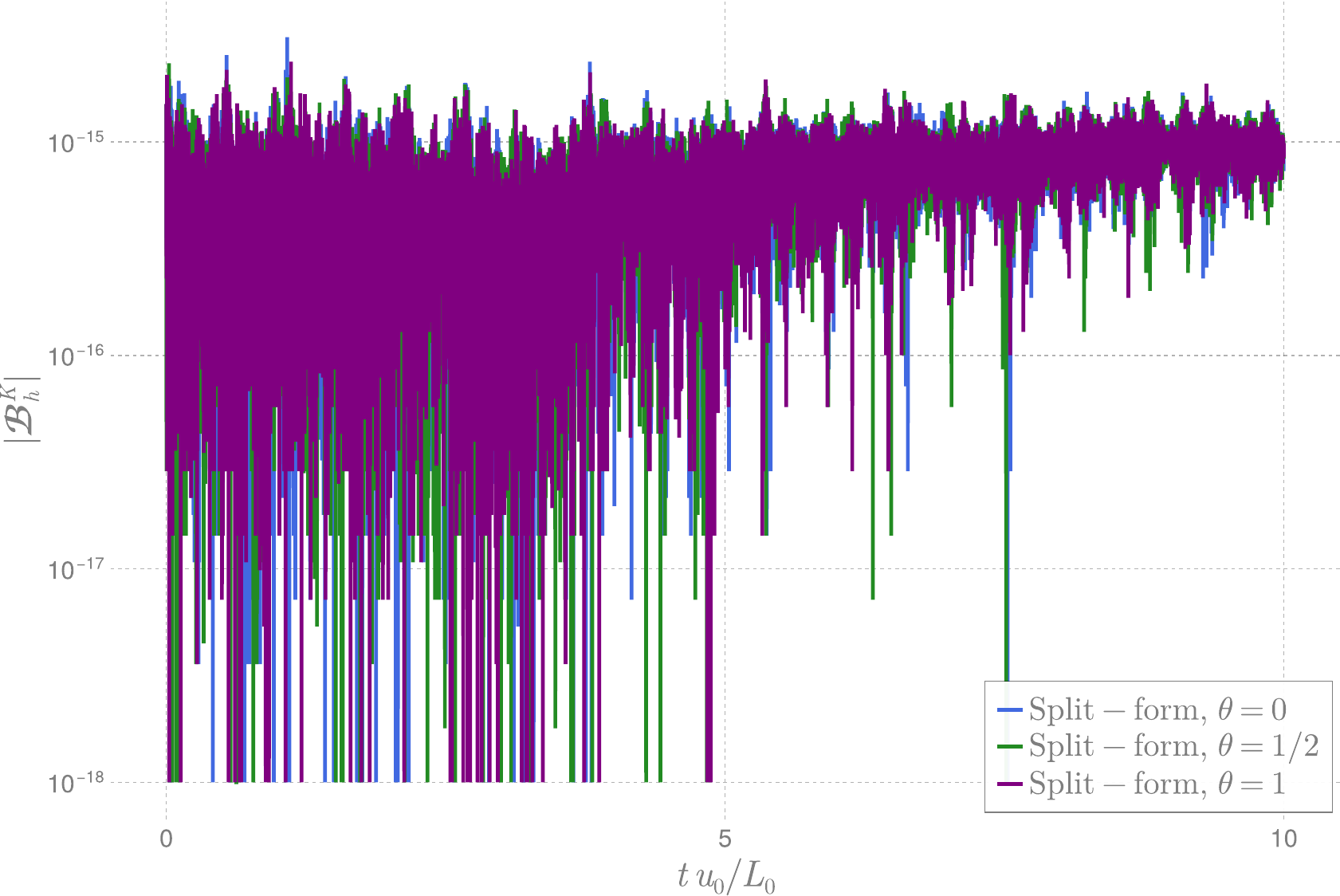}
    \caption{Discrete kinetic-energy balance $\lvert\mathcal B_h^{K}\rvert$ against non-dimensional time for the Taylor--Green vortex with an immersed sphere, over $12\,500$ steps at $\Delta t=8\e{-4}$.}
    \label{fig:kinetic_balance}
\end{figure}

\Cref{fig:kinetic_balance} reports the balance $\mathcal B_h^{K}$ over time and it shows that the balance holds throughout. The kinetic-energy balance of the Pirozzoli flux is therefore unaffected by the immersed source, apart from the source's own contribution $\mathcal S_h$.

\subsubsection{Entropy budget}

\begin{table}[H]
    \centering
    \begin{tabular}{c c c}
        \toprule
        $\theta$ & $\lvert\mathcal B_h^{\eta}\rvert$ & ratio to $\theta=0$ \\
        \midrule
        $0$   & $3.492\e{-8}$  & $1$\\
        $1/2$ & $1.748\e{-8}$  & $0.500633$\\
        $1$   & $1.330\e{-16}$ & $3.810\e{-9}$\\
        \bottomrule
    \end{tabular}
    \caption{Taylor--Green vortex with an immersed sphere, Chandrashekar volume flux: discrete entropy balance at $t=10$, and its ratio to the $\theta=0$ value. The ratio reproduces the factor $1-\theta$ of \cref{eq:discrete_ib_dissipation}, that is $1$, $1/2$ and $0$.}
    \label{tab:tgv_entropy}
\end{table}

\Cref{tab:tgv_entropy} confirms two theoretical predictions concerning the entropy balance. First, the ratio column recovers the $(1-\theta)$ scaling predicted by \cref{eq:discrete_ib_dissipation}. At $\theta=1/2$, the measured value is $0.500633$ against the exact $1/2$. Second, at $\theta=1$, the immersed contribution vanishes identically, leaving the residual to measure only the volume operator. For the entropy-conservative flux, this yields $1.330\e{-16}$, at round-off. The entropy-conservative property of the Chandrashekar flux is therefore preserved by the immersed source with a negative balance as \cref{eq:semidiscrete_entropy_inequality} requires.

\Cref{fig:entropy_balance} shows that the scaling holds throughout rather than only at the final time. The immersed dissipation itself decays, from $5.5\e{-2}$ to $3.5\e{-8}$, because $\mathcal D_h^{\mathrm{IB}}$ is proportional to $\lvert\vrel\rvert^2$ and the penalised region is driven towards rest. Notice how the lines for $\theta=1/2$ and $\theta=0$ track one another exactly, with a shift. The ratio between $\theta=0$ and $\theta=1/2$ stays at $2.00$ through that decay of six orders of magnitude, so the factor $1-\theta$ of \cref{eq:discrete_ib_dissipation} is reproduced continuously. The curve at $\theta=1$ remains below $2.4\e{-16}$ throughout.

\begin{figure}[H]
    \centering
    \includegraphics[width=0.8\textwidth]{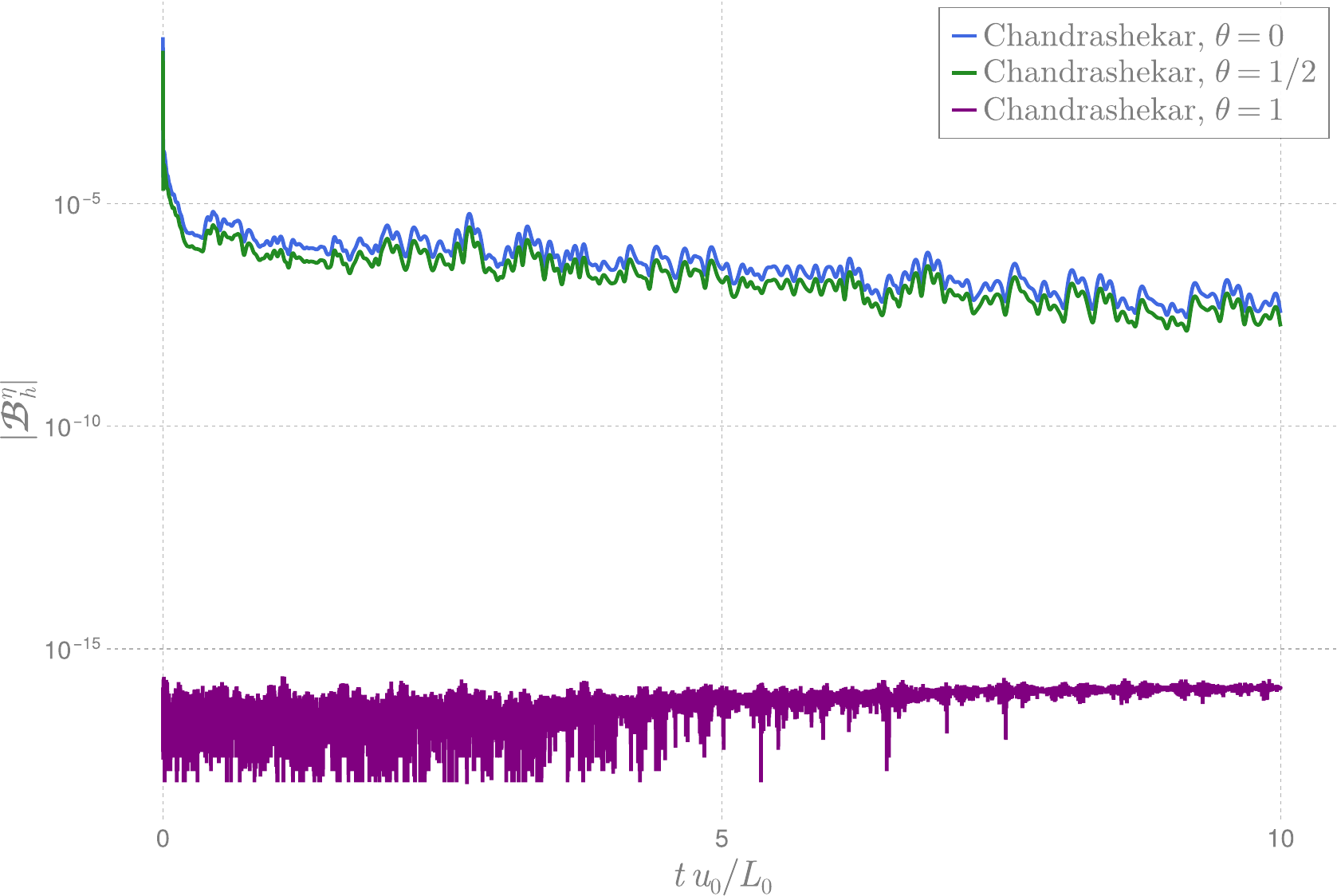}
    \caption{Discrete entropy balance $\lvert\mathcal B_h^{\eta}\rvert$ against non-dimensional time.}
    \label{fig:entropy_balance}
\end{figure}

\subsection{Moving immersed body}
\label{subsec:moving}

Both preceding cases use static geometries (i.e., $\vels=\bm0$), leaving the relative-energy framework untested. This last case tests the implementation of a body in uniform translation, and reports both the total and the relative energy budgets.

A circular cylinder of radius $R=0.5$ translates with constant velocity $\vels=U\bm e_1$ through fluid initially at rest, in a two-dimensional periodic domain $[-4\pi,4\pi]^2$ discretised with $40^2$ elements and polynomial order $P=3$. The parameters are $\gamma=1.4$, $p_0=100$, $\mathrm{Re}=100$, $\rho_0=1$, so that $M_0=0.0845$, and $U$ is set to the reference velocity so that $\lvert\vels\rvert=1$ in non-dimensional terms. The relaxation time is fixed at $\tau_p=10^{-2}$, so that $\lambda=100$, and a fixed step $\Delta t=8\e{-4}$ is used, with $12\,500$ steps taken, so that the cylinder travels ten diameters and stops short of the periodic boundary. As in \cref{subsec:tgv}, two experiments are performed. The first uses the Pirozzoli split form with a Roe interface flux and reports the energy budget, the second the entropy-conservative Chandrashekar flux with the volume flux reused at the interfaces and reports the entropy budget.

\subsubsection{Energy budget}

Constant translation is chosen deliberately. With $\vels$ uniform and steady, $\partial_t\vels=\bm0$ and $\nabla\vels=\bm0$, so both body-work terms on the right of \cref{eq:full_relative_energy_balance} vanish identically and the penalisation is the only source term that remains. We form the relative total energy from the discrete counterpart of \cref{eq:relative_total_energy},
\begin{equation}
    \mathcal T_h^{\mathrm{rel}}
    =\mathcal T_h
     -\frac{1}{V}\sum_{e\in\Omega}\inner{J\,\vels\cdot(\rho\vel)}{1}
     +\frac{1}{2V}\sum_{e\in\Omega}\inner{J\rho\lvert\vels\rvert^2}{1} .
    \label{eq:global_relative_energy}
\end{equation}

\begin{figure}[H]
    \centering
    \includegraphics[width=0.8\textwidth]{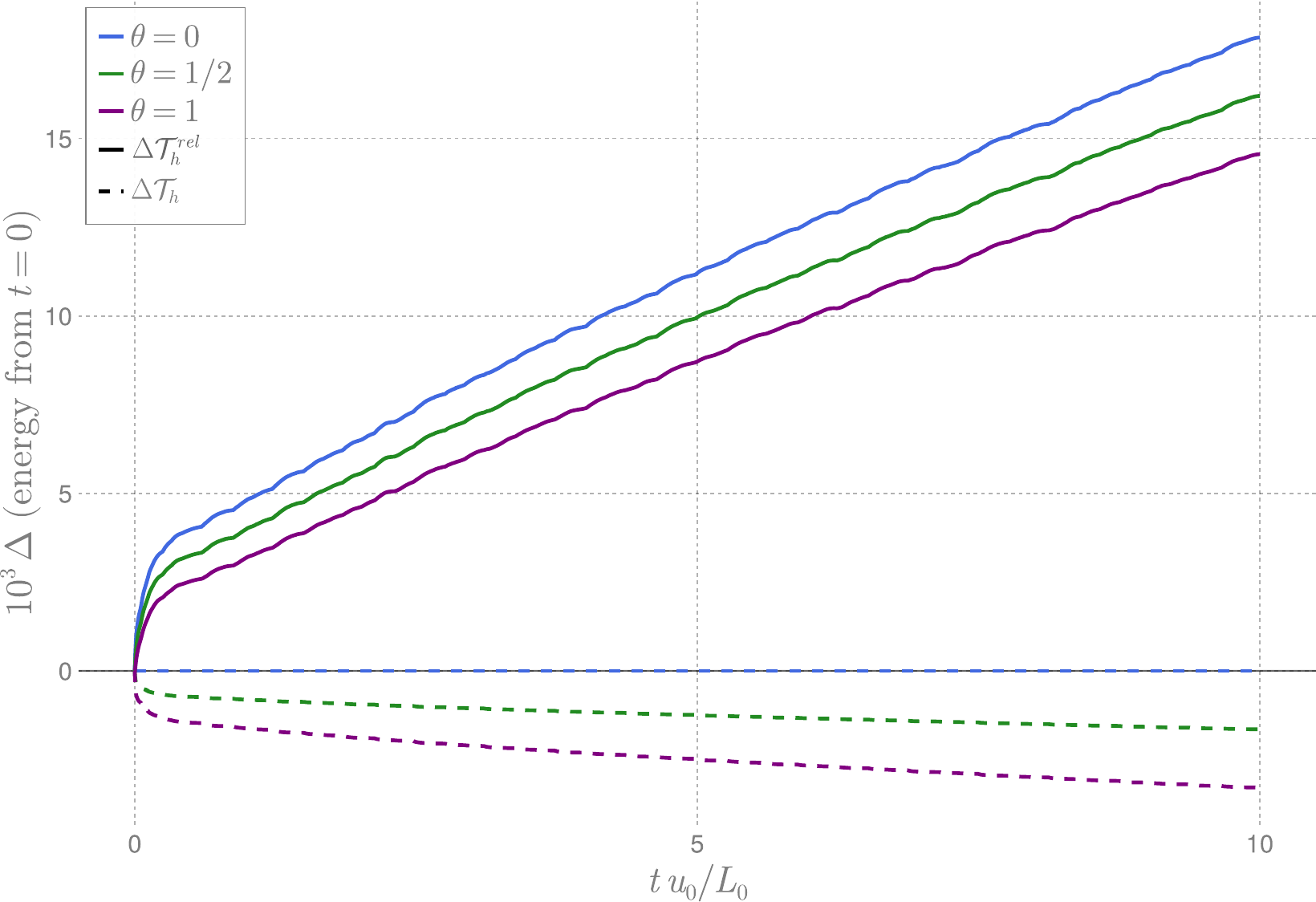}
    \caption{Change in total energy $\mathcal T_h$ (solid) and in relative total energy $\mathcal T_h^{\mathrm{rel}}$ (dashed) against non-dimensional time, over the $12\,500$ steps at $\Delta t=8\e{-4}$. The total energy increases for every admissible $\theta$ while the relative one never does, the difference between them being the work done by the penalisation force at the body velocity. The relative curve at $\theta=0$ remains on zero.}
    \label{fig:moving_energy}
\end{figure}

\Cref{fig:moving_energy} shows both quantities over the whole run. The total-energy curves rise for every admissible $\theta$ while the relative curves are non-increasing, and the gap between the two families is the work $\vels\cdot\bm s_{\rho\vel}$, the term separating the two columns of \cref{eq:total_energy_source}. At $\theta=0$ the relative curve stays on zero to round-off, and the total-energy curve is then the steepest of the three, because none of the kinetic-energy loss leaves the domain. The two remaining relative curves are in the ratio $1.9996$ against the exact $2$, so the factor $-\theta$ of \cref{eq:total_energy_source} holds over the whole run and not merely at one instant. 

The total energy is therefore not a stability diagnostic once the body moves, whereas the relative total energy behaves as \cref{eq:total_energy_source} predicts. Note that this analysis rests on the translation being uniform and steady. For an accelerating body for instance, the term (b) of \cref{eq:full_relative_energy_balance} survives with no sign, so $\mathcal T_h^{\mathrm{rel}}$ may report growth. However, this growth is not produced by the source. It arises from the reference velocity itself changing, and would be present even with $\lambda=0$. The relative energy was introduced to recover the sign-definite estimate on energy for a moving body, and it does so only while $\vels$ is constant.

\subsubsection{Entropy budget}

The entropy estimate carries no condition on $\vels$, so it should survive the body motion unchanged. Repeating the case with the Chandrashekar volume flux reused at the interfaces, and no dissipation added there, isolates the immersed contribution in $\mathcal B_h^{\eta}$ exactly as in \cref{subsec:tgv}.

\begin{table}[H]
    \centering
    \begin{tabular}{c c c}
        \toprule
        $\theta$ & $\lvert\mathcal B_h^{\eta}\rvert$ & ratio to $\theta=0$ \\
        \midrule
        $0$   & $2.467\e{-7}$  & $1$\\
        $1/2$ & $1.235\e{-7}$  & $0.500587$\\
        $1$   & $6.602\e{-17}$ & $2.676\e{-10}$\\
        \bottomrule
    \end{tabular}
    \caption{Translating cylinder, Chandrashekar volume flux: discrete entropy balance at $t=10$, and its ratio to the $\theta=0$ value. The ratio reproduces the factor $1-\theta$ of \cref{eq:discrete_ib_dissipation}, that is $1$, $1/2$ and $0$.}
    \label{tab:moving_entropy}
\end{table}

\Cref{tab:moving_entropy} reproduces for a moving body what \cref{tab:tgv_entropy} shows for a stationary one, the ratio at $\theta=1/2$ being $0.500587$, and the residual at $\theta=1$ falling to round-off, where the immersed contribution vanishes and only the volume operator is left. The balance is negative at every sample, as \cref{eq:semidiscrete_entropy_inequality} requires.

\begin{figure}[H]
    \centering
    \includegraphics[width=0.8\textwidth]{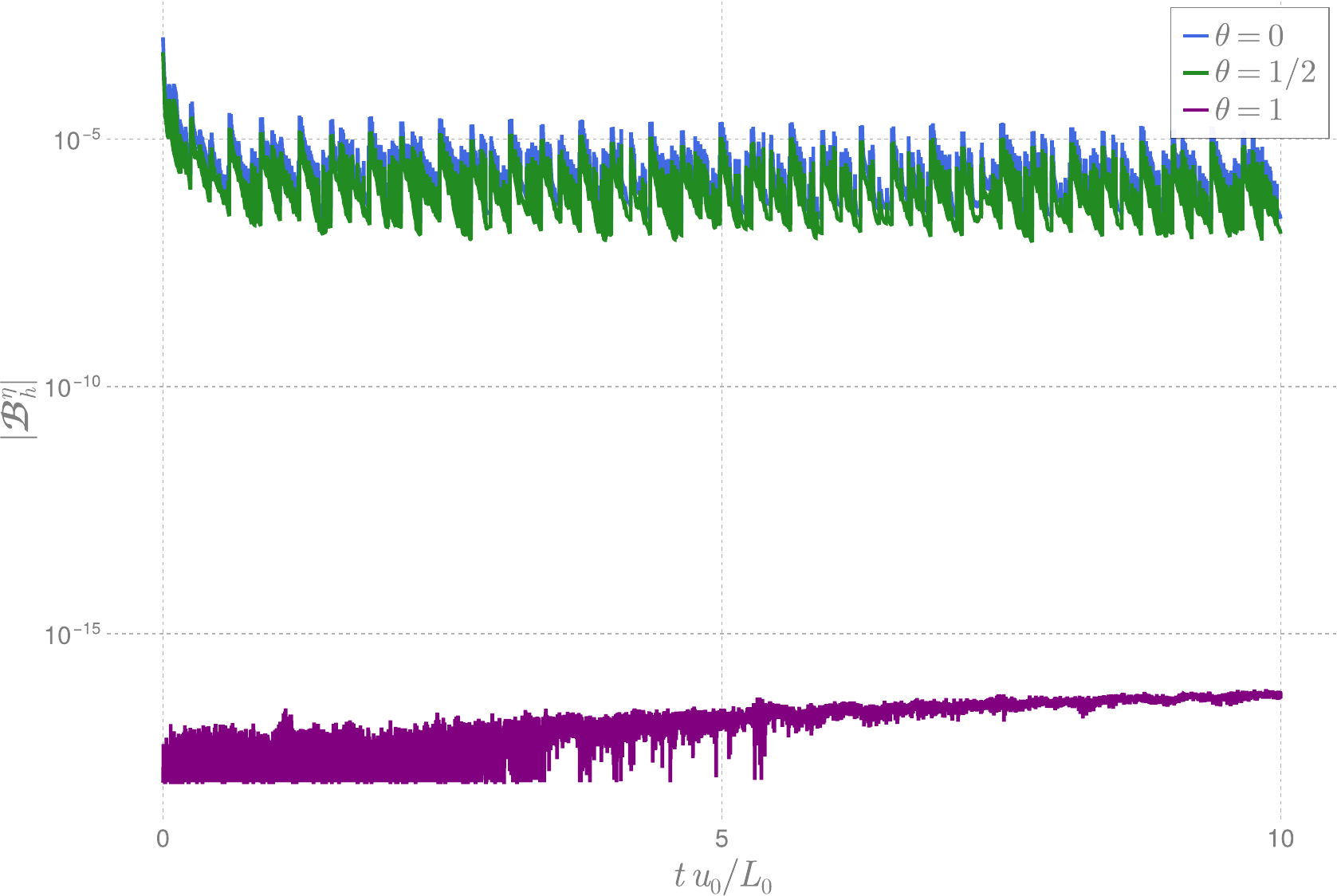}
    \caption{Discrete entropy balance $\lvert\mathcal B_h^{\eta}\rvert$ against non-dimensional time for the translating cylinder.}
    \label{fig:moving_entropy}
\end{figure}

\Cref{fig:moving_entropy} shows that the scaling holds throughout rather than only at the final time. The $\theta=0$ and $\theta=1/2$ curves track one another with a constant shift as the immersed dissipation decays, while the $\theta=1$ curve stays at round-off for the whole run.

\section{Conclusions}
\label{sec:conclusions}

A one-parameter family of compressible Navier--Stokes volume-penalisation sources has been formulated and analysed within the standard entropy-pair and split-form DGSEM framework. Two of its members are already in use: the end point $\theta=1$, as the adiabatic-wall penalisation of \cite{AbgrallEtAl2014}, and the symmetric $\theta=1/2$ \cite{KouEtAl2022VP,KouFerrer2023PoF,FerrerEtAl2023,NunezEtAl2025}. What is established here is the family itself, and the two-sided bound that the relative-energy and entropy requirements place on $\theta$.

For every non-trivial penalisation, the one-parameter momentum--energy source is simultaneously dissipative in total energy measured relative to the solid velocity and stable for the convex mathematical entropy if and only if $0\leq\theta\leq1$. The two requirements bound $\theta$ from opposite sides: the relative-energy estimate from below and the entropy estimate from above. The parameter sets where the kinetic-energy loss goes: a fraction $\theta$ of the relative kinetic energy destroyed by the momentum source leaves the domain, and the remaining $1-\theta$ becomes internal energy. The symmetric value $\theta=1/2$ divides it equally. For a moving body, the prescribed motion does work on the fluid and may raise the total energy. Measured relative to the solid velocity there is no such work, and the penalisation contribution is non-positive.

The split-form DGSEM proof shows that an entropy-stable compressible Navier--Stokes discretisation remains semi-discretely entropy stable after addition of the collocated source. This includes entropy-conservative two-point volume fluxes such as Chandrashekar, with an entropy-stable viscous operator. Kinetic-energy-preserving split forms such as Pirozzoli remain compatible with the relative-energy dissipation of the source as well. The source is therefore modular and can be added to any operator that satisfies the desired stability property, and the resulting discretisation inherits that property.

Neither the entropy nor the relative-energy results require whole-element penalisation. With positive diagonal quadrature, any subset of Gauss or Gauss--Lobatto points in a cut element may be penalised. Active nodes contribute non-positive entropy and relative-energy terms, while inactive nodes contribute zero. Integrated exactly in time, the isolated source preserves relative-energy decay, internal-energy positivity, and physical-entropy production for arbitrary time steps.

The analysis was verified numerically in three configurations. A homogeneous relaxation with every node penalised reproduces the closed-form energy, kinetic-energy, and entropy changes to time-integration accuracy. A Taylor--Green vortex containing a stationary immersed sphere confirms that the total-energy change is linear in $\theta$, that a kinetic-energy-preserving split form retains a machine-zero kinetic-energy balance once the source contribution is subtracted, and that the entropy balance of an entropy-conservative flux falls to round-off at $\theta=1$, where the immersed entropy contribution vanishes. A cylinder in uniform translation shows the total energy increasing for every admissible $\theta$ while the relative total energy never increases and is conserved to round-off at $\theta=0$, where the source contribution vanishes. 
Three scope limitations should be noted. First, $\vels$ is prescribed throughout, and bodies whose motion is coupled to the flow are excluded from the analysis, since the solid energy would then contribute to the overall balance. Second, the global relative-energy statement is restricted to a stationary body or a uniform translation. For an accelerating, rotating, or spatially varying $\vels$ the body-work terms (a) and (b) of \cref{eq:full_relative_energy_balance} remain, carry no definite sign, and $\mathcal T_h^{\mathrm{rel}}$ need not decay, although \cref{thm:semidiscrete_energy} continues to hold for the source contribution at any prescribed $\vels$. Third, the estimates are semi-discrete for the coupled scheme, and exact only for the isolated source. A fully discrete estimate would require a time integrator that preserves the corresponding discrete entropy inequality, whereas the computations presented here employ a standard explicit scheme and therefore satisfy the estimates up to time-integration error.

These results establish a modular route to semi-discretely energy- and entropy-stable immersed discretisations. Provided the bulk discretisation is stable, the source can be added without further modification and the resulting scheme inherits the same stability property.

\section*{Acknowledgements}
E.F. and A.J. acknowledge funding from the European Union (ERC, Off-coustics, project number 101086075). Views and opinions expressed are, however, those of the authors only and do not necessarily reflect those of the European Union or the European Research Council. Neither the European Union nor the granting authority can be held responsible for them.

All authors acknowledge funding from the Grant DeepCFD (Project No. PID2022-137899OB-I00), funded by MICIU/AEI/10.13039/501100011033 and by ERDF, EU.

\section*{Data availability}
Data will be made available on request.

\bibliographystyle{elsarticle-num}
\bibliography{refs}

@article{Peskin2002,
  author  = {Peskin, Charles S.},
  title   = {The immersed boundary method},
  journal = {Acta Numerica},
  volume  = {11},
  pages   = {479--517},
  year    = {2002},
  doi     = {10.1017/S0962492902000077}
}

@article{MittalIaccarino2005,
  author  = {Mittal, Rajat and Iaccarino, Gianluca},
  title   = {Immersed boundary methods},
  journal = {Annual Review of Fluid Mechanics},
  volume  = {37},
  pages   = {239--261},
  year    = {2005},
  doi     = {10.1146/annurev.fluid.37.061903.175743}
}

@article{KimChoi2019,
  author  = {Kim, Woojin and Choi, Haecheon},
  title   = {Immersed boundary methods for fluid--structure interaction: A review},
  journal = {International Journal of Heat and Fluid Flow},
  volume  = {75},
  pages   = {301--309},
  year    = {2019},
  doi     = {10.1016/j.ijheatfluidflow.2019.01.010}
}

@article{Mittal2023,
  author  = {Mittal, Rajat},
  title   = {Origin and evolution of immersed boundary methods in computational fluid dynamics},
  journal = {Physical Review Fluids},
  volume  = {8},
  pages   = {100501},
  year    = {2023},
  doi     = {10.1103/PhysRevFluids.8.100501}
}

@article{Angot1999,
  author  = {Angot, Philippe and Bruneau, Charles-Henri and Fabrie, Pierre},
  title   = {A penalization method to take into account obstacles in incompressible viscous flows},
  journal = {Numerische Mathematik},
  volume  = {81},
  pages   = {497--520},
  year    = {1999},
  doi     = {10.1007/s002110050401}
}

@article{TuPeskin1992,
  author  = {Tu, Cheng and Peskin, Charles S.},
  title   = {Stability and instability in the computation of flows with moving immersed boundaries: A comparison of three methods},
  journal = {SIAM Journal on Scientific and Statistical Computing},
  volume  = {13},
  number  = {6},
  pages   = {1361--1376},
  year    = {1992},
  doi     = {10.1137/0913077}
}

@article{HuaPeskin2022,
  author  = {Hua, Mengjian and Peskin, Charles S.},
  title   = {An analysis of the numerical stability of the immersed boundary method},
  journal = {Journal of Computational Physics},
  volume  = {467},
  pages   = {111435},
  year    = {2022},
  doi     = {10.1016/j.jcp.2022.111435}
}

@article{HuLai2013,
  author  = {Hu, Wen-Feng and Lai, Ming-Chih},
  title   = {An unconditionally energy stable immersed boundary method with application to vesicle dynamics},
  journal = {East Asian Journal on Applied Mathematics},
  volume  = {3},
  number  = {3},
  pages   = {247--262},
  year    = {2013},
  doi     = {10.4208/eajam.250713.150813a}
}

@article{WangJimackWalkley2019,
  author  = {Wang, Yongxing and Jimack, Peter K. and Walkley, Mark A.},
  title   = {Energy analysis for the one-field fictitious domain method for fluid--structure interactions},
  journal = {Applied Numerical Mathematics},
  volume  = {140},
  pages   = {165--182},
  year    = {2019},
  doi     = {10.1016/j.apnum.2019.02.003}
}

@article{BurmanHansbo2012,
  author  = {Burman, Erik and Hansbo, Peter},
  title   = {Fictitious domain finite element methods using cut elements: II. A stabilized Nitsche method},
  journal = {Applied Numerical Mathematics},
  volume  = {62},
  number  = {4},
  pages   = {328--341},
  year    = {2012},
  doi     = {10.1016/j.apnum.2011.01.008}
}

@article{BurmanHansbo2014,
  author  = {Burman, Erik and Hansbo, Peter},
  title   = {Fictitious domain methods using cut elements: III. A stabilized Nitsche method for Stokes' problem},
  journal = {ESAIM: Mathematical Modelling and Numerical Analysis},
  volume  = {48},
  number  = {3},
  pages   = {859--874},
  year    = {2014},
  doi     = {10.1051/m2an/2013123}
}

@article{TaylorWilcoxChan2025,
  author  = {Taylor, Christina G. and Wilcox, Lucas C. and Chan, Jesse},
  title   = {An energy stable high-order cut cell discontinuous Galerkin method with state redistribution for wave propagation},
  journal = {Journal of Computational Physics},
  volume  = {521},
  pages   = {113528},
  year    = {2025},
  doi     = {10.1016/j.jcp.2024.113528}
}

@article{TaylorChan2026,
  author  = {Taylor, Christina G. and Chan, Jesse},
  title   = {An entropy stable high-order discontinuous Galerkin method on cut meshes},
  journal = {Journal of Computational Physics},
  volume  = {548},
  pages   = {114551},
  year    = {2026},
  doi     = {10.1016/j.jcp.2025.114551}
}

@article{KouEtAl2022VP,
  author  = {Kou, Jiaqing and Joshi, Saumitra and Hurtado-de-Mendoza, Aurelio and Puri, Kunal and Hirsch, Charles and Ferrer, Esteban},
  title   = {Immersed boundary method for high-order flux reconstruction based on volume penalization},
  journal = {Journal of Computational Physics},
  volume  = {448},
  pages   = {110721},
  year    = {2022},
  doi     = {10.1016/j.jcp.2021.110721}
}

@article{KouEtAl2022Eigen,
  author  = {Kou, Jiaqing and Hurtado-de-Mendoza, Aurelio and Joshi, Saumitra and Le Clainche, Soledad and Ferrer, Esteban},
  title   = {Eigensolution analysis of immersed boundary method based on volume penalization: Applications to high-order schemes},
  journal = {Journal of Computational Physics},
  volume  = {449},
  pages   = {110817},
  year    = {2022},
  doi     = {10.1016/j.jcp.2021.110817}
}

@article{LlorenteEtAl2023,
  author  = {Llorente, V. J. and Kou, Jiaqing and Valero, Eusebio and Ferrer, Esteban},
  title   = {A modified equation analysis for immersed boundary methods based on volume penalization: Applications to linear advection--diffusion equations and high-order discontinuous Galerkin schemes},
  journal = {Computers \& Fluids},
  volume  = {257},
  pages   = {105869},
  year    = {2023},
  doi     = {10.1016/j.compfluid.2023.105869}
}

@article{KouFerrer2023JCP,
  author  = {Kou, Jiaqing and Ferrer, Esteban},
  title   = {A combined volume penalization/selective frequency damping approach for immersed boundary methods applied to high-order schemes},
  journal = {Journal of Computational Physics},
  volume  = {472},
  pages   = {111678},
  year    = {2023},
  doi     = {10.1016/j.jcp.2022.111678}
}

@article{KouFerrer2023PoF,
  author  = {Kou, Jiaqing and Ferrer, Esteban},
  title   = {A combined volume penalization/selective frequency damping approach for immersed boundary methods: Application to moving geometries},
  journal = {Physics of Fluids},
  volume  = {35},
  pages   = {121702},
  year    = {2023},
  doi     = {10.1063/5.0179779}
}

@article{FerrerEtAl2023,
  author  = {Ferrer, Esteban and Rubio, Gonzalo and Ntoukas, Gerasimos and Laskowski, Wojciech and Mari\~no, Oscar A. and Colombo, Stefano and Mateo-Gab\'in, A. and Narbona, H. and Manrique de Lara, F. and Huergo, David and Manzanero, Juan and Rueda-Ram\'irez, A. M. and Kopriva, David A. and Valero, Eusebio},
  title   = {{HORSES3D}: A high-order discontinuous Galerkin solver for flow simulations and multi-physics applications},
  journal = {Computer Physics Communications},
  volume  = {287},
  pages   = {108700},
  year    = {2023},
  doi     = {10.1016/j.cpc.2023.108700}
}

@article{Colombo2025,
  author  = {Colombo, Stefano and Rubio, Gonzalo and Kou, Jiaqing and Valero, Eusebio and Codina, Ramon and Ferrer, Esteban},
  title   = {A high-order immersed boundary method to approximate flow problems in domains with curved boundaries},
  journal = {Journal of Computational Physics},
  volume  = {528},
  pages   = {113807},
  year    = {2025},
  doi     = {10.1016/j.jcp.2025.113807}
}

@article{Chandrashekar2013,
  author  = {Chandrashekar, Praveen},
  title   = {Kinetic energy preserving and entropy stable finite volume schemes for compressible Euler and Navier--Stokes equations},
  journal = {Communications in Computational Physics},
  volume  = {14},
  number  = {5},
  pages   = {1252--1286},
  year    = {2013},
  doi     = {10.4208/cicp.170712.010313a}
}

@article{CarpenterEtAl2014,
  author  = {Carpenter, Mark H. and Fisher, Travis C. and Nielsen, Eric J. and Frankel, Steven H.},
  title   = {Entropy stable spectral collocation schemes for the Navier--Stokes equations: Discontinuous interfaces},
  journal = {SIAM Journal on Scientific Computing},
  volume  = {36},
  number  = {5},
  pages   = {B835--B867},
  year    = {2014},
  doi     = {10.1137/130932193}
}

@article{GassnerEtAl2018,
  author  = {Gassner, Gregor J. and Winters, Andrew R. and Hindenlang, Florian J. and Kopriva, David A.},
  title   = {The {BR1} scheme is stable for the compressible Navier--Stokes equations},
  journal = {Journal of Scientific Computing},
  volume  = {77},
  number  = {1},
  pages   = {154--200},
  year    = {2018},
  doi     = {10.1007/s10915-018-0702-1}
}

@article{Tadmor2003,
  author  = {Tadmor, Eitan},
  title   = {Entropy stability theory for difference approximations of nonlinear conservation laws and related time-dependent problems},
  journal = {Acta Numerica},
  volume  = {12},
  pages   = {451--512},
  year    = {2003},
  doi     = {10.1017/S0962492902000156}
}

@article{FisherCarpenter2013,
  author  = {Fisher, Travis C. and Carpenter, Mark H.},
  title   = {High-order entropy stable finite difference schemes for nonlinear conservation laws: Finite domains},
  journal = {Journal of Computational Physics},
  volume  = {252},
  pages   = {518--557},
  year    = {2013},
  doi     = {10.1016/j.jcp.2013.06.014}
}

@article{NunezEtAl2025,
  author  = {N{\'u}{\~n}ez, Jonatan and Huergo, David and Lodares, Diego and Shrestha, Suyash and Guerra, Juan and Florenciano, Juan and Ferrer, Esteban and Valero, Eusebio},
  title   = {Implementation of immersed boundaries via volume penalization in the industrial aeronautical computational fluid dynamics solver {CODA}},
  journal = {Engineering with Computers},
  volume  = {41},
  number  = {4},
  pages   = {2571--2592},
  year    = {2025},
  doi     = {10.1007/s00366-025-02119-x}
}

@article{Ranocha2018,
  author  = {Ranocha, Hendrik},
  title   = {Comparison of Some Entropy Conservative Numerical Fluxes for the {Euler} Equations},
  journal = {Journal of Scientific Computing},
  volume  = {76},
  number  = {1},
  pages   = {216--242},
  year    = {2018},
  doi     = {10.1007/s10915-017-0618-1}
}

@incollection{Ranocha2020,
  author    = {Ranocha, Hendrik},
  title     = {Entropy Conserving and Kinetic Energy Preserving Numerical Methods for the {Euler} Equations Using Summation-by-Parts Operators},
  booktitle = {Spectral and High Order Methods for Partial Differential Equations {ICOSAHOM} 2018},
  editor    = {Sherwin, Spencer J. and Moxey, David and Peir{\'o}, Joaquim and Vincent, Peter E. and Schwab, Christoph},
  series    = {Lecture Notes in Computational Science and Engineering},
  volume    = {134},
  pages     = {525--535},
  publisher = {Springer},
  address   = {Cham},
  year      = {2020},
  doi       = {10.1007/978-3-030-39647-3_42}
}

@article{Pirozzoli2010,
  author  = {Pirozzoli, Sergio},
  title   = {Generalized Conservative Approximations of Split Convective Derivative Operators},
  journal = {Journal of Computational Physics},
  volume  = {229},
  number  = {19},
  pages   = {7180--7190},
  year    = {2010},
  doi     = {10.1016/j.jcp.2010.06.006}
}

@article{Kopriva2006,
  author  = {Kopriva, David A.},
  title   = {Metric identities and the discontinuous spectral element method on curvilinear meshes},
  journal = {Journal of Scientific Computing},
  volume  = {26},
  number  = {3},
  pages   = {301--327},
  year    = {2006},
  doi     = {10.1007/s10915-005-9070-8}
}

@article{GassnerWintersKopriva2016,
  author  = {Gassner, Gregor J. and Winters, Andrew R. and Kopriva, David A.},
  title   = {Split form nodal discontinuous Galerkin schemes with summation-by-parts property for the compressible {Euler} equations},
  journal = {Journal of Computational Physics},
  volume  = {327},
  pages   = {39--66},
  year    = {2016},
  doi     = {10.1016/j.jcp.2016.09.013}
}

@article{BassiRebay1997,
  author  = {Bassi, Francesco and Rebay, Stefano},
  title   = {A high-order accurate discontinuous finite element method for the numerical solution of the compressible {Navier--Stokes} equations},
  journal = {Journal of Computational Physics},
  volume  = {131},
  number  = {2},
  pages   = {267--279},
  year    = {1997},
  doi     = {10.1006/jcph.1996.5572}
}

@article{AbgrallEtAl2014,
  author  = {Abgrall, R\'emi and Beaugendre, H\'eloise and Dobrzynski, C\'ecile},
  title   = {An immersed boundary method using unstructured anisotropic mesh adaptation combined with level-sets and penalization techniques},
  journal = {Journal of Computational Physics},
  volume  = {257},
  pages   = {83--101},
  year    = {2014},
  doi     = {10.1016/j.jcp.2013.08.052}
}

@article{LiuVasilyev2007,
  author  = {Liu, Qianlong and Vasilyev, Oleg V.},
  title   = {A {B}rinkman penalization method for compressible flows in complex geometries},
  journal = {Journal of Computational Physics},
  volume  = {227},
  number  = {2},
  pages   = {946--966},
  year    = {2007},
  doi     = {10.1016/j.jcp.2007.07.037}
}

\end{document}